\newenvironment{customthm}[1]
  {\innercustomthm}
  {\endinnercustomthm}
\newtheorem{theorem}{Theorem}[section]
\newtheorem{proposition}[theorem]{Proposition}
\newtheorem{lemma}[theorem]{Lemma}
\newtheorem{corollary}[theorem]{Corollary}
\theoremstyle{definition}
\newtheorem{definition}[theorem]{Definition}
\theoremstyle{remark}
\newtheorem{remark}[theorem]{Remark}
\numberwithin{equation}{section}
\def \d {\delta}
\def \dd {\partial}
\def \D {\Delta}
\def \DD {\mathcal D}
\def \DDCF {{\mathcal D}\operatorname{-CF}_0}
\def \Der {\operatorname{Der}}
\def \C {\mathcal C_{\mathcal U}}
\title[On the DME for Poisson-Hopf algebras]{On the Dixmier-Moeglin equivalence for Poisson-Hopf algebras}
\author{St\'ephane Launois}
\address{St\'ephane Launois\\
University of Kent\\
School of Mathematics, Statistics, and Actuarial Science \\
Canterbury, Kent \ CT2 7NZ \\
UK}
\email{s.launois@kent.ac.uk}
\thanks{The work of SL was supported by EPSRC grant EP/N034449/1. }
\author{Omar Le\'on S\'anchez}
\address{Omar Le\'on S\'anchez\\
University of Manchester\\
School of Mathematics\\
Oxford Road \\
Manchester, M13 9PL.}
\email{omar.sanchez@manchester.ac.uk}
\date{\today}
\subjclass[2010]{Primary 17B63; Secondary 03C98, 12H05, 16T05.}
\keywords{Dixmier-Moeglin equivalence, Poisson-Hopf algebras, model theory of differential fields}
\begin{document}

\begin{abstract}
We prove that the Poisson version of the Dixmier-Moeglin equivalence holds for cocommutative affine Poisson-Hopf algebras. This is a first step towards understanding the symplectic foliation and the representation theory of (cocommutative) affine Poisson-Hopf algebras. Our proof makes substantial use of the model theory of fields equipped with finitely many possibly noncommuting derivations. As an application, we show that the symmetric algebra of a finite dimensional Lie algebra, equipped with its natural Poisson structure, satisfies the Poisson Dixmier-Moeglin equivalence.
\end{abstract}

\maketitle

\tableofcontents

\section{Introduction}

We fix a field $k$ of characteristic zero, and recall that by an affine $k$-algebra one means a finitely generated one that is a domain. The aim of this note is to study the representation theory and the geometry of affine Poisson-Hopf $k$-algebras via methods from model theory of differential fields. For the reader unfamiliar with Poisson-Hopf algebras, let us mention at this point that these include symmetric algebras of finite-dimensional Lie algebras (endowed with their natural Poisson bracket, see Section \ref{sec:Poisson-Hopf}). For further examples, we refer the reader to \cite{KS} and \cite{LWZ}.

As it is often the case, classifying simple representations or symplectic leaves of Poisson(-Hopf) algebras is too wide a problem, and so we are approaching it by studying the so-called Poisson-primitive ideals. There are several equivalent ways to define these (prime Poisson) ideals. Given our representation theoretic and geometric motivation, we just give two equivalent definitions at this stage. Let $A=O(V)$ be an affine Poisson $k$-algebra (so that $V$ is a Poisson variety over $k$). The Poisson-primitive ideals of $A$ are the defining ideals of the Zariski closure of the symplectic leaves of $V$. Equivalently, they are precisely the annihilators of the simple Poisson $A$-modules. Thus, classifying Poisson-primitive ideals of a Poisson(-Hopf) algebra is a first step towards understanding both its symplectic foliation and its representation theory.

The goal of this paper is to provide a topological criterion to characterise Poisson-primitive ideals among prime Poisson ideals of a Poisson-Hopf algebra. More precisely, we prove that Poisson-primitive ideals of a cocommutative affine Poisson-Hopf algebra are exactly those prime ideals that are Poisson-locally closed. In \cite{BLLM}, the authors together with Bell and Moosa proved that Poisson-primitive ideals coincide with the so-called Poisson-rational ideals. So, combining this with our results here, we obtain that for affine cocommutative Poisson-Hopf algebras, the notions of Poisson-rational, Poisson-primitive and Poisson-locally closed coincide. The coincidence of these three notions is often referred to as the Poisson Dixmier-Moeglin equivalence (see Section \ref{sec:PDME}), so that we can state our main result as follows.

\begin{customthm}{1}
\label{thm:A}
 Any cocommutative affine Poisson-Hopf $k$-algebra satisfies the Poisson Dixmier-Moeglin equivalence. 
\end{customthm}

We note that the Poisson Dixmier-Moeglin equivalence does not always hold for a Poisson algebra. By \cite[1.7(i), 1.10]{Oh}, in an affine Poisson $k$-algebra $A$, we have that Poisson-locally closed implies Poisson-primitive, and Poisson-primitive implies Poisson-rational (this also follows from Proposition~\ref{imply} below). It was shown in \cite{BLLM}, that when $A$ has Krull-dimension at most three, then Poisson-rational implies Poisson-locally closed, and so the Poisson-DME holds in this case. However, in that same paper, counterexamples of Krull-dimension $d$ were build, for any $d\geq 4$. Hence, the main point of this note is to show that those (counter-)examples cannot admit a cocommutative Poisson-Hopf algebra structure; and that in fact in this case Poisson-rational does imply Poisson-locally closed. 

While in general one cannot remove the Hopf algebra assumption in Theorem~\ref{thm:A}, one natural question to ask at this point is: can we remove the cocommutative assumption? That is, does the Poisson Dixmier-Moeglin equivalence hold in any affine Poisson-Hopf algebra? While we currently do not have an answer, in Remark~\ref{suggest}(2) below we suggest how one could address this question. We note that in the differential-Hopf algebra context in a \emph{single derivation} the cocommutative assumption can indeed be removed, see \cite[Theorem 2.19]{BLM}, and also that Bell and Leung have asked a similar question in the noncommutative setting, see \cite[Conjecture 1.3]{BLe}.

As symmetric algebras of finite dimensional Lie algebras are examples of cocommutative affine Poisson-Hopf algebras, Theorem 1 applies to this family of Poisson $k$-algebras. Thus, we obtain (with very different methods) a Poisson analogue of the foundational result of Dixmier and Moeglin, later generalized by Irving and Small, that asserts that primitive ideals in the enveloping algebra $U(\frak{g})$ of a finite-dimensional Lie $k$-algebra $\frak{g}$ are precisely the locally closed prime ideals of  $U(\frak{g})$~\cite{DM}.

\begin{customthm}{2}
If $A$ is the symmetric algebra of a finite dimensional Lie $k$-algebra $\mathfrak g$, equipped with its natural Poisson bracket, then $A$ satisfies the Poisson Dixmier-Moeglin equivalence.
\end{customthm}

We expect that our results on the representation theory of Poisson-Hopf algebras will help us better understand representations of Hopf algebras in general. This is justified by the fact that connected Hopf algebras arise (in a rough sense) as deformations of Poisson-Hopf algebras \cite{Zhuang}.

It is important to note that a significant part of the proof of Theorem \ref{thm:A} makes substantial use of the model theory of differential fields, via the theory of algebraic $D$-varieties and $D$-groups (see Sections \ref{variso} and \ref{groupiso}). The novelty of this paper, compared to \cite{BLLM} or \cite{BLM} where the model theory of \emph{ordinary} differential fields was used, is that we work in the context of several possibly noncommuting derivations. While the model theory of differential fields with \emph{commuting} derivations has fruitfully been applied in other areas of mathematics (for instance, in differential Galois theory, see \cite{LSPillay}), to the authors knowledge this paper contains the first application of the model-theoretic properties of the theory of differential fields where no commutativity assumption is made among the derivations. We expect (and hope) that the ideas presented here will motivate the further use of these tools in new areas of algebra, and perhaps initiate the study of the model theory of Poisson rings.

\section{Some preliminaries}\label{pre}

Recall that for us $k$ denotes a field of characteristic zero, and that by an affine algebra we mean a finitely generated one that is a domain.

\subsection{Poisson-Hopf algebras}
\label{sec:Poisson-Hopf}

Recall that a Poisson $k$-algebra is a commutative $k$-algebra $A$ equipped with a Lie bracket $\{-,-\}$ such that
$$\{a,bc\}=\{a,b\}c+b\{a,c\}, \quad \text{ for all } a,b,c\in A.$$
In other words, for each $a\in A$, the map $\{a,-\}:A\to A$ is a derivation.

Given a Poisson $k$-algebra $(A,\{-,-\})$ the tensor algebra $A\otimes A$ can be naturally equipped with a Poisson $k$-algebra structure as follows; define
$$\{a\otimes b,a'\otimes b'\}=\{a,a'\}\otimes b\,b' +a\, a'\otimes \{b,b'\}$$
for $a,b,a'a,b'\in A$ and extend to all of $A\otimes A$ by $k$-linearity.

\begin{definition}
A {\em Poisson-Hopf $k$-algebra} $A$ is a Poisson $k$-algebra with the additional structure of a Hopf algebra such that the Poisson bracket $\{-,-\}$ commutes with coproduct $\Delta$; that is,
$$\Delta(\{a,b\})=\{\D(a),\D(b)\}$$
for all $a,b\in A$. 
\end{definition}

\begin{remark}\label{gen}
Let $(A,\{-,-\})$ be a Poisson algebra with a Hopf algebra structure. In order to prove that the Poisson bracket commutes with coproduct it suffices to check that if $G$ is a set of generators of $A$ (as a $k$-algebra) then
$$
\Delta(\{a,b\})=\{\D(a),\D(b)\}
$$
for all $a\in G$ and $b\in G$. Indeed, suppose $a\in G$ and $b\in A$, then $b=f(g_1,\dots,g_s)$ for some polynomial $f$ over $k$ and $g_i\in G$. Since $\{a,-\}$ and $\{\Delta(a),-\}$ are $k$-linear derivations, we get
\begin{align*}
\Delta(\{a,b\}) = & \Delta\left(\sum_{i=1}^s\frac{\partial f}{\partial t_i}(g_1,\dots,g_s)\cdot\{a,g_i\}\right) \\
 = & \sum_{i=1}^s\frac{\partial f}{\partial t_i}(\Delta(g_1),\dots,\Delta(g_s))\cdot\{\Delta(a),\Delta(g_i)\} \\
 = & \{\Delta(a),f(\Delta(g_1,\dots,\Delta(g_s))\} \\
 = & \{\Delta(a),\Delta(b)\}.
\end{align*}
The general case, when $a,b\in A$, follows from this in similar fashion; now writing $a$ as $h(g_1,\dots,g_s)$ and using the fact that $\{-,b\}$ and $\{-,\Delta(b)\}$ are $k$-linear derivations. 
\end{remark}

We now describe a general example of cocommutative Poisson-Hopf algebra. Namely, let $A$ be the symmetric algebra of a finite dimensional Lie $k$-algebra $\mathfrak g$. Such a (symmetric) algebra $A$ is of the form $k[x_1,\dots,x_s]$ where the $x_i$'s form a $k$-basis for the Lie algebra $\mathfrak g$. It is well known that $A$ becomes a Poisson algebra with Poisson bracket defined  by:
$$\{f,g\}:=\sum_{i<j} [x_i,x_j] \left( \frac{\partial f}{\partial x_i} \frac{\partial g}{\partial x_j} - \frac{\partial f}{\partial x_j}\frac{\partial g}{\partial x_i} \right) .$$
Moreover, we can equip $A$ with a Hopf algebra structure where the $x_i$'s are all primitive (so that $A$ is cocommutative). One can check that then the Poisson bracket commutes with the coproduct. Indeed, it follows from Remark~\ref{gen} that we only need to prove that 
$$
\Delta(\{x_i,x_j\})=\{\D(x_i),\D(x_j)\}
$$
for all $i,j$. Using the fact that every element of $\mathfrak g$ is primitive and that $\Delta$ is an algebra homomorphism, we get
\begin{align*}
\{\D(x_i),\D(x_j)\} = & \{1\otimes x_i + x_i \otimes 1,1 \otimes x_j +x_j \otimes 1 \} \\
 = & 1 \otimes \{x_i,x_j\} + \{x_i, x_j \} \otimes 1 \\
 = & 1 \otimes [x_i,x_j] + [x_i, x_j ] \otimes 1  \\
 = &\Delta ([x_i,x_j])\\
  = &\Delta (\{x_i,x_j\}) ~ \mbox{ for all }i,j.\\
\end{align*}
Thus $A$ is indeed a cocommutative affine Poisson-Hopf $k$-algebra. 

Further examples of (affine) Poisson-Hopf algebras are given by the coordinate ring of Poisson affine algebraic groups \cite{KS}. These examples are not, however, generally cocommutative, unless of course the algebraic group is abelian.

\subsection{The Poisson Dixmier-Moeglin equivalence}
\label{sec:PDME}

Let $(A,\{-,-\})$ be a Poisson $k$-algebra . An ideal $I$ of $A$ is a {\em Poisson ideal} if $\{A,I\}\subseteq I$. The {\em Poisson-center} of $A$ is defined as
$$Z_p(A)=\{a\in A: \{A,a\}=0\}.$$
Recall that when $A$ is a domain, there is a natural Poisson structure on $\operatorname{Frac} A$ (induced by the quotient rule of derivations). 

We denote by $Spec_P A$ the subspace of $Spec A$ consisting of Poisson ideals. The \emph{Poisson core} of an ideal $I$ of $A$ is defined as the largest Poisson ideal contained in $I$. A prime Poisson ideal $P$ of $A$ is said to be
\begin{itemize}
\item {\em Poisson-locally closed} if 
$$\bigcap_{P\subsetneq Q\in Spec_P A} Q\; \neq \; P$$
\item {\em Poisson-primitive} if $P$ is the Poisson core of a maximal ideal of $A$.
\item {\em Poisson-rational} if $Z_P(\operatorname{Frac}A/P)$ is an algebraic extension of $k$.
\end{itemize}

We say that a $A$ satisfies the \emph{Poisson Dixmier-Moeglin equivalence} (or Poisson-DME) if a prime Poisson ideal of $A$ is Poisson-locally closed iff it is Poisson-primitve iff it is Poisson-rational. Let us remark that, by \cite[Lemma 3.5]{BG}, the above definition of Poisson-primitive does correspond to the equivalent definitions given in the introduction.

\subsection{Differential algebras} 

We now briefly recall some facts about differential $k$-algebras that will be useful in subsequent sections. For any $k$-algebra $A$, we denote by $\Der_k(A)$ the $A$-vector space of $k$-linear derivations on $A$. 

\begin{remark}\label{ongenset}
Let $(A,\{-,-\})$ be a Poisson $k$-algebra and $G$ a set of generators of $A$. Recall that a Hamiltonian of $A$ is an element of $\Der_k(A)$ of the form $\{a,-\}$ for some $a\in A$. An easy computation shows that the $A$-vector subspace of $\Der_k(A)$ spanned by the Hamiltonians of $A$ is equal to
$$\operatorname{span}_A(\{a,-\}:\, a\in G).$$
\end{remark}

Due to the above remark, to check that an ideal in a Poisson algebra is a Poisson ideal one only needs to check that it is invariant under the Hamiltonians of a set of generators. More generally, we have


\begin{lemma}\label{onideals}
Let $(A,\{-,-\})$ be a Poisson $k$-algebra. If $\DD \subseteq \Der_k(A)$ is such that 
$$\operatorname{span}_A\DD=\operatorname{span}_A(\{a,-\}:\, a\in A),$$
then 
\begin{enumerate}
\item an ideal $I$ of $A$ is Poisson iff it is a $\DD$-ideal (i.e., invariant under $\DD$), and
\item the Poisson-center $Z_{P}(A)$ equals the $\DD$-constants of $A$ (i.e., $\displaystyle \cap_{\d\in \DD}\operatorname{ker}\d$). Furthermore, if $A$ is a domain, 
$$Z_P(\operatorname{Frac}A)=\DD\text{-constants}(\operatorname{Frac} A).$$
\end{enumerate}
\end{lemma}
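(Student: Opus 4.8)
The plan is to exploit the spanning hypothesis to translate freely between Hamiltonians and elements of $\DD$, using throughout that an ideal is stable under multiplication by $A$ (for part (1)) and the very definition of the Poisson center (for part (2)). The one formal fact underlying everything is that, since $\operatorname{span}_A\DD=\operatorname{span}_A(\{a,-\}:a\in A)$, every $\delta\in\DD$ can be written as a finite sum $\delta=\sum_i f_i\{a_i,-\}$ with $f_i,a_i\in A$, and conversely every Hamiltonian can be written as $\{a,-\}=\sum_j g_j\delta_j$ with $g_j\in A$ and $\delta_j\in\DD$.

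For part (1), I would prove both implications from these expansions. If $I$ is Poisson and $b\in I$, then for $\delta=\sum_i f_i\{a_i,-\}\in\DD$ we get $\delta(b)=\sum_i f_i\{a_i,b\}$; each $\{a_i,b\}$ lies in $I$ because $I$ is Poisson, and then $f_i\{a_i,b\}\in I$ because $I$ is an ideal, so $\delta(b)\in I$ and $I$ is a $\DD$-ideal. The converse is entirely symmetric: if $I$ is a $\DD$-ideal and $b\in I$, expanding $\{a,-\}=\sum_j g_j\delta_j$ yields $\{a,b\}=\sum_j g_j\delta_j(b)\in I$.

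For part (2), I would recall that $a\in Z_P(A)$ iff $\{b,a\}=0$ for all $b\in A$, and run the same two expansions. If $a\in Z_P(A)$, then for $\delta=\sum_i f_i\{a_i,-\}$ we have $\delta(a)=\sum_i f_i\{a_i,a\}=0$, so $a$ is a $\DD$-constant; conversely, if $a$ is a $\DD$-constant, then for every $b\in A$ we get $\{b,a\}=\sum_j g_j\delta_j(a)=0$, so $a\in Z_P(A)$.

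The only step requiring genuine care—and the natural candidate for the main obstacle—is the fraction field statement, where one must check that the spanning hypothesis survives the passage to $\operatorname{Frac}A$. Here I would use that derivations of a domain extend uniquely to its fraction field, together with the quotient rule $\{a/s,-\}=\frac1s\{a,-\}-\frac{a}{s^2}\{s,-\}$, which shows every Hamiltonian of $\operatorname{Frac}A$ lies in $\operatorname{span}_{\operatorname{Frac}A}(\{c,-\}:c\in A)$. Extending coefficients in the given identity then yields $\operatorname{span}_{\operatorname{Frac}A}\DD=\operatorname{span}_{\operatorname{Frac}A}(\{a,-\}:a\in\operatorname{Frac}A)$, after which the argument for part (2) applies verbatim over $\operatorname{Frac}A$ to give $Z_P(\operatorname{Frac}A)=\DD\text{-constants}(\operatorname{Frac}A)$.
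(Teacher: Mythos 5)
Your proof is correct and is precisely the argument the paper has in mind — the paper itself dismisses this lemma as "an easy exercise" and gives no details. Your expansion in both directions via the spanning hypothesis, and your care over extending the spanning identity to $\operatorname{Frac}A$ via the quotient rule, supply exactly the omitted details.
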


\begin{proof}
This is an easy exercise. We leave the details to the reader.
\end{proof}

Suppose $A$ is a commutative $k$-algebra equipped with a family of $k$-linear derivations $\DD$. We denote by $Spec_\DD A$ the subspace of $Spec A$ consisting of $\DD$-ideals. The \emph{$\DD$-core} of an ideal $I$ of $A$ is defined as the largest $\DD$-ideal contained in $I$. A prime $\DD$-ideal $P$ of $A$ is said to be
\begin{itemize}
\item {\em $\DD$-locally closed} if 
$$\bigcap_{P\subsetneq Q\in Spec_\DD A} Q\; \neq\;  P$$
\item {\em $\DD$-primitive} if $P$ is the $\DD$-core of a maximal ideal of $A$.
\item {\em $\DD$-rational} if the $\DD$-constants of $\operatorname{Frac}A/P$ is an algebraic extension of $k$.
\end{itemize}

We say that a $A$ satisfies the \emph{$\DD$-Dixmier-Moeglin equivalence} (or $\DD$-DME) if a prime $\DD$-ideal of $A$ is $\DD$-locally closed iff it is $\DD$-primitve iff it is $\DD$-rational. 

The following is an easy consequence of Lemma~\ref{onideals}.

\begin{corollary}\label{iff}
Let $(A,\{-,-\})$ be a Poisson $k$-algebra. If $\DD \subseteq \Der_k(A)$ is such that 
$$\operatorname{span}_A\DD=\operatorname{span}_A(\{a,-\}:\, a\in A),$$
then a prime ideal $P$ of $A$ is 
\begin{enumerate}
\item Poisson-locally closed iff it is $\DD$-locally closed,
\item Poisson-primitive iff it is $\DD$-primitive, and
\item Poisson -rational iff it is $\DD$-rational.
\end{enumerate}
Consequently, $(A,\{-,-\})$ satisfies the Poisson-DME iff it satisfies the $\DD$-DME.
\end{corollary}

Given a commutative Hopf $k$-algebra $A$ equipped with a family of $k$-linear derivations $\DD$, we say that $A$ is a \emph{differential-Hopf algebra} if each derivation commutes with coproduct; that is,
$$\d(\D(a))=\D(\d a)$$ 
for all $a\in A$ and $\d\in \DD$. Here recall that the derivations $\DD$ naturally lift to $A\otimes A$ as follows;
$$\d(a\otimes b)=\d a\otimes b+ a\otimes \d b$$
for all $a,b\in A$ and extend by $k$-linearity to all of $A\otimes A$.

\begin{remark} \
\begin{enumerate}
\item As we did in Remark \ref{gen}, one can check that, in a commutative Hopf $k$-algebra $A$ equipped with $k$-linear derivations $\DD$, the derivations $\DD$ commute with coproduct if and only if $\d(\D(a))=\D(\d a)$ for all $\d\in \DD$ and $a$ varying in a set of generators of $A$.
\item Suppose $(A,\{-,-\})$ is an affine Poisson-Hopf $k$-algebra, we do not know if there is $\DD \subseteq \Der_k(A)$ with
$$\operatorname{span}_A\DD=\operatorname{span}_A(\{a,-\}:\, a\in A)$$
and such that $(A,\DD)$ is a differential-Hopf algebra. Nonetheless, in Proposition \ref{key}, we prove that it is possible to find such $\DD$ in the case when $k$ is algebraically closed and $A$ is cocommutative.
\end{enumerate}
\end{remark}

\section{On affine $D$-varieties and isotriviality}\label{variso}

In this section we present the basics of the theory of affine algebraic $D$-varieties in the context of finitely many (possibly noncommuting) derivations, together with the notions of isotriviality and compound isotriviality. It is worth noting that the theory of $D$-varieties in the context of commuting derivations appears in \cite{Buium}.

We make, somewhat freely (specially compared to \cite{BLLM,BLM}), use of basic model-theoretic terminology, for which \cite{Marker} should suffice. The appropriate model-theoretic context here is that of fields equipped with finitely many (possibly noncommuting) derivations. Fix a positive integer $m$. We work in the first-order language of differential rings equipped with $m$ derivations
$$\mathcal L_{diff}=\mathcal L_{rings}\cup\{\d_1,\dots,\d_m\}.$$
One can easily axiomatize the class of differential fields $(K,\d_1,\dots,\d_m)$ of characteristic zero (where no commutativity assumption is made among the derivations). Such a differential field is called \emph{existentially closed} if any quantifier-free $\mathcal L_{diff}$-formula with a realisation in a differential field extension of $K$ already has a realisation in $K$. Note that such differential fields exist (by a standard Zorn's lemma and chain construction argument). 

It turns out that there is a first-order axiomatization of the class of existentially closed differential fields of characteristic zero. This is a consequence of the general results in \cite{MS}, and following their notation we denote this theory by $\DDCF$. In that same paper, the authors established that this is a complete stable theory with quantifier elimination and elimination of imaginaries. Moreover, they showed that given $(\mathcal U, \d_1,\dots,\d_m)\models \DDCF$, if $K$ is a differential subfield then $K=dcl^{\mathcal U}(K)$, and if additionally $K$ is algebraically closed then $K=acl^{\mathcal U}(K)$.

We now fix a sufficiently large saturated model $(\mathcal U,\DD= \{\d_1,\dots,\d_m\})\models \DDCF$. This means that given a small (i.e., $|K|<|\mathcal U|$) differential subfield $K$ of $\mathcal U$, and a (possibly infinite) collection $\Sigma$ of $\mathcal L_{diff}$-formulas with parameters from $K$, if every finite subcollection of $\Sigma$ is satisfiable in $\mathcal U$ then so is all of $\Sigma$. One of the most important definable subsets of $\mathcal U$ is its subfield of constants, which is defined as
$$\mathcal{C}_{\mathcal U}=\bigcap_{i=1}^m \operatorname{ker} \d_i.$$
A subset of $\mathcal U^n$ that is an arbitrary (possibly infinite) intersection of definable sets will be called a \emph{type-definable} set. 

\begin{remark}\label{ondef}
The field $\C$ is algebraically closed and it is \emph{purely stably embedded}. This means that any subset of $\C^n$ that is definable in the $\mathcal L_{diff}$-structure $\mathcal U$, over some differential subfield $K$, is actually definable in the $\mathcal L_{rings}$-structure $\C$ over $\mathcal C_{K}$. In particular, by $\omega$-stability of the theory of algebraically closed fields, if $G\subseteq \C^n$ is a type-definable group over $K$ in the differential structure $(\mathcal U, \d_1,\dots,\d_m)$, then $G$ is definable over $C_K$ in the pure-field structure $\C$.
\end{remark}

We now discuss affine algebraic $D$-varieties. Fix a (small) differential subfield $K<\mathcal U$. We say that a Zariski closed set $V\subseteq \mathcal U^n$ defined over $K$ is a $D$-variety if its coordinate ring $K[V]$ is equipped with a family of $m$ derivations $\bar \dd=\{\dd_1,\dots,\dd _m\}$ such that $\dd_i$ extends $\d_i|_K$.

We now wish to give a more algebro-geometric characterization of affine $D$-varieties. Given a Zariski closed $V\subseteq \mathcal U^n$ over $K$ and $\d\in \DD$, the \emph{$\d$-prolongation of $V$} is the Zariski-closed set $\tau_\d\subseteq \mathcal U^{2n}$ defined by the equations
$$f(\bar x)=0 \quad \text{ and }\quad \sum_{i=1}^n\frac{\partial f}{\partial x_i}(\bar x)\cdot y_i+f^{\d}(\bar x)=0$$
for all $f\in \mathcal I(V/K):=\{f\in K[\bar x]: f(V)=0\}$, where $\bar x=(x_1,\dots,x_n)$ and $f^\d\in K[\bar x]$ is obtained by applying $\d$ to the coefficient of $f$. It is easy to check that it suffices to vary $f$ in a family of generators of the ideal $\mathcal I(V/K)$. Consequently, if $V$ is defined over a field $k$ of constants (i.e., $k< \C$), then $\tau_\d V$ is nothing more that the tangent bundle $TV$ of $V$.

More generally, the $\DD$-prolongation of $V$, denoted by $\tau_{\DD} V \subset \mathcal U^{n(m+1)}$, is defined as the fibred-product
$$\tau_{\DD}V=\tau_{\d_1} V\times_V\cdots \times_V \tau_{\d_m} V.$$
Note that $\tau_\DD V$ comes equipped with a canonical projection map $\tau_\DD V \to V$. The $\DD$-prolongation has the characteristic property that for any $a\in V$ we have that
$$(a,\d_1 a,\dots ,\d_m a)\in \tau_\DD V;$$
in other words, the map $\bar x\mapsto (\bar x,\d_1 \bar x,\dots,\d_m\bar x)$ defines a differential regular section of $\tau_\DD V\to V$.

If $V$ is defined over a field $k$ of constants, then $\tau_\DD V$ equals the $m$-fold fibred-product of $TV$ and hence it comes with a canonical (algebraic) section, the zero section $s_0:V\to \tau_\DD V$. On the other hand, for arbitrary $V$ defined over $K$, the existence of an algebraic regular section $s:V\to \tau_\DD V$ over $K$ turns out to be equivalent to a $D$-variety structure on $V$. Indeed, if $\bar\partial=\{\dd_1,\dots,\dd_m\}$ are derivations on the coordinate ring $K[V]$ extending those on $K$, and we let $\bar z=(z_1,\dots,z_n)$ be its coordinate functions and set
$$s(\bar z)=(\bar z,\dd_1(\bar z),\dots,\dd_m(\bar z))$$
where $\dd_i(\bar z)=(\dd_i(z_1),\dots,\dd_i(z_n))$, then it is not hard to check (using the fact that the $\dd_i$'s are derivations) that this $s$ yields a section of $\tau_\DD V\to V$ which is regular and over $K$. On the other hand, any such section
$$s=(\operatorname{Id}, s_1,\dots,s_m)$$
corresponds to the derivations on $K[V]$ induced by setting $\dd_i(\bar z)=s_i(\bar z)$. 

From now on, we will usually refer to a $D$-variety as a pair $(V,s)$ where $V$ is an affine algebraic variety (viewed as a Zariski closed subset of $\mathcal U^n$) and $s$ is a section of $\tau_\DD V\to V$. A point $a \in V$ will be called a $D$-point of $V$ if 
$$s(a)=(a,\d_1(a),\dots,\d_m(a)).$$
We will denote the set of $D$-points of $V$ by $(V,s)^\#$. Note that this is an example of a definable set in the structure $(\mathcal U, \d_1,\dots,\d_m)$ and in fact this will be the main source of such examples for us. Also, note that if $V$ is defined over a field of constants and $s=s_0$ (the zero section), then $(V,s)^\#$ equals $V(\C)$, the $\C$-points of $V$.

A Zariski closed subset $W$ of a $D$-variety $(V,s)$ is said to be a \emph{$D$-subvariety} if $s(W)\subseteq \tau_\DD W$; of course, in this case $(W,s|_W)$ will be a $D$-variety. Also, a regular map $f:V\to W$ between $D$-varieties $(V,s)$ and $(W,t)$ is said to be a $D$-morphism if $f$ maps $D$-points to $D$-points.

\begin{remark}
It is easy to check that $W\subset V$ is a $D$-subvariety iff the ideal $\mathcal I(W/K)\subset K[V]$ is a $\bar\partial$-ideal. Also, a regular map between $D$-varieties $f:V\to W$ is a $D$-morphism iff the pull-back $f^*:K[W]\to K[V]$ is a $\bar\partial$-ring homomorphism.
\end{remark}

\begin{lemma}\label{basic}
Let $(V,s)$ and $(W,t)$ be affine algebraic $D$-varieties over $K$ and $f$ a $D$-morphism between them. Then,
\begin{enumerate}
\item each $K$-irreducible component of $V$ is a $D$-subvariety,  
\item the set of $D$-points of $V$ is Zariski-dense in $V$,
\item if $X$ is a $D$-subvariety of $W$, then $f^{-1}(X)$ is a $D$-subvariety of $V$, and
\item if $Y$ is a $D$-subvariety of $V$, then the Zariski closure of $f(Y)$ is a $D$-subvariety of $W$.
\end{enumerate}
\end{lemma}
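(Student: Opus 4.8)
The plan is to reduce all four items to the ideal-theoretic dictionary recorded in the remark preceding the statement: a Zariski closed $W\subseteq V$ is a $D$-subvariety exactly when $\mathcal I(W/K)\subseteq K[V]$ is a $\bar\partial$-ideal, and $f\colon V\to W$ is a $D$-morphism exactly when $f^*\colon K[W]\to K[V]$ is a $\bar\partial$-ring homomorphism, where $\bar\partial=\{\partial_1,\dots,\partial_m\}$ denotes the derivations on the coordinate rings induced by the sections $s$ and $t$. With this dictionary in hand, items (1), (3), and (4) become purely differential-algebraic assertions about (possibly noncommuting) derivations on Noetherian $\mathbb Q$-algebras, while item (2) is where the model theory of $\DDCF$ genuinely enters.

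For (1), I would note that $K[V]$ is reduced, so the zero ideal is a radical $\bar\partial$-ideal whose minimal primes $P_1,\dots,P_r$ are precisely the defining ideals of the $K$-irreducible components, and $\bigcap_j P_j=(0)$. Given $a\in\bigcap_{j\ne i}P_j\setminus P_i$ and $x\in P_i$, one has $ax=0$, so $0=\partial_\ell(ax)=a\,\partial_\ell x+x\,\partial_\ell a$ for each $\ell$; since $x\,\partial_\ell a\in P_i$ this forces $a\,\partial_\ell x\in P_i$, and primeness of $P_i$ together with $a\notin P_i$ yields $\partial_\ell x\in P_i$. This argument proceeds one derivation at a time, so no commutation among the $\partial_\ell$ is used; hence each $P_i$ is a $\bar\partial$-ideal and each component is a $D$-subvariety.

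For (2), I would work component by component using (1), so assume $V$ is $K$-irreducible. Then $K[V]$ is a domain carrying $\bar\partial$, and its fraction field $F=K(V)$ is a differential field extension of $K$ generated by the images of the coordinate functions $\bar z$, satisfying $\partial_\ell\bar z=s_\ell(\bar z)$. The quantifier-free type over $K$ asserting that a tuple $x$ is a generic point of $V$ and satisfies $\delta_\ell x=s_\ell(x)$ is consistent, being realised by $\bar z$ inside $F$; since $\mathcal U\models\DDCF$ is existentially closed and saturated, this type is realised by some $a\in\mathcal U^n$. By construction $a$ is a $D$-point of $V$ whose ideal over $K$ is exactly $\mathcal I(V/K)$, so its Zariski closure is all of $V$, and therefore $(V,s)^\#$ is Zariski-dense. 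I expect this realisation of a \emph{generic} $D$-point to be the crux of the lemma: it is precisely where existential closedness of $\DDCF$ is invoked, and the subtle point is to verify that the embedding produces a generic (hence dense) point rather than an arbitrary solution.

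Finally, (3) and (4) follow from two standard closure properties of differential ideals, again valid one derivation at a time. For (4), the ideal of $\overline{f(Y)}$ equals $(f^*)^{-1}(\mathcal I(Y/K))$; since $f^*$ is a $\bar\partial$-homomorphism and $\mathcal I(Y/K)$ is a $\bar\partial$-ideal, the preimage is a radical $\bar\partial$-ideal, so $\overline{f(Y)}$ is a $D$-subvariety. For (3), the ideal of $f^{-1}(X)$ is the radical of the ideal of $K[V]$ generated by $f^*(\mathcal I(X/K))$; the latter is a $\bar\partial$-ideal because $f^*(\partial_\ell h)=\partial_\ell f^*(h)$ keeps $f^*(\mathcal I(X/K))$ stable under each $\partial_\ell$, and I would then invoke Keigher's lemma (the radical of a differential ideal in a $\mathbb Q$-algebra is differential) to conclude that $\mathcal I(f^{-1}(X)/K)$ is a $\bar\partial$-ideal, so $f^{-1}(X)$ is a $D$-subvariety. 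This completes all four parts.
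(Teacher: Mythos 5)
Your proof is correct, and parts (2) and (3) follow essentially the paper's own route: for (2) both arguments amount to realising the differential field $(K(V),\bar\partial)$ inside $\mathcal U$ over $K$ (the paper embeds $K(a,b)$ with the derivations dictated by the prolongation equations via saturation; you realise the quantifier-free generic $D$-point type via existential closedness plus saturation), and for (3) both pass to the ideal generated by $f^*(\mathcal I(X/K))$, check stability under each derivation using that $f^*$ commutes with them, and then take radicals citing the Kaplansky/Keigher fact. Where you genuinely diverge is in (1) and (4), which you handle by purely differential-algebraic ideal theory rather than by generic points: for (1) you run the classical ``minimal primes of a radical differential ideal are differential'' computation ($0=a\,\partial_\ell x+x\,\partial_\ell a$, then use primeness), whereas the paper argues geometrically that the fibre of $\tau_{\mathcal D}W$ over a generic point of a component $W$ coincides with that of $\tau_{\mathcal D}V$; and for (4) you observe that $\mathcal I(\overline{f(Y)}/K)=(f^*)^{-1}(\mathcal I(Y/K))$ is automatically a $\bar\partial$-ideal as the preimage of one under a $\bar\partial$-homomorphism, whereas the paper pushes forward a Zariski generic $D$-point of $Y$ and so quietly relies on part (2). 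Your versions of (1) and (4) are arguably cleaner: they need no saturation, work one derivation at a time exactly as you note, and remove the logical dependence of (4) on (2); the paper's generic-point arguments, on the other hand, keep everything in the geometric language of prolongations that the rest of the section uses. Either way the content is the same and nothing is missing.
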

\begin{proof}
(1) Let $W$ be a $K$-irreducible component of $V$ and $a$ a Zariski $K$-generic point of $W$. It suffices to show that $s(a)\in \tau_\DD W$. As $a$ is not contained in any of the other $K$-irreducible components of $V$, from the nature of the equations defining $\tau_\DD W$, we get that $\tau_\DD W_a$ (the fibre above $a$) coincides with $\tau_\DD V_a$. The claim now follows.

(2) By (1), we may assume that $V$ is $K$-irreducible. Let $W$ be a proper Zariski closed subset of $V$ and $a$ a Zariski $K$-generic point of $V$ (hence $a\notin W$). Let $b=(b_1,\dots,b_m)\in \mathcal U^{nm}$ be such that $(a,b)=s(a)$, since $b\in \tau_\DD V_ a$, the equations of $\tau_\DD V$ yield that there are derivations $\dd_i: K(a,b)\to K(a,b)$, for $i=1,\dots,m$, such that $\dd_i|_K=\d_i|_K$ and $\dd_i(a)= b_i$ (see for instance \cite[Chapter~7, \S5]{Lang}). By saturation of $\mathcal U$, there is a differential field embedding $(K(a,b),\bar \dd) \to (\mathcal U,\DD)$ fixing $K$. So there is point $(a',b')$ in $\mathcal U^{n(m+1)}$, namely the image of $(a,b)$, such that $a'\in V\setminus W$ and 
$$s(a')=(a',b')=(a,\d_1 a',\dots,\d_m a').$$
That is, $a'$ is a $D$-point of $V$ not in $W$. Thus, the set of $D$-points is dense in $V$.

(3) To prove that $f^{-1}(X)$ is a $D$-subvariety of $V$ it suffices to show that 
$$\mathcal I(f^{-1}(X)/K)\subseteq K[V]$$ 
is a $\DD$-ideal. Recall that this ideal is given as the radical ideal generated by $f^*(\mathcal I(X/K))$. As radical ideals of $\DD$-ideals are again $\DD$-ideals (see \cite[Lemma 1.8]{Kap}), it suffices to show that the ideal generated by $f^*(\mathcal I(X/K))$ in $K[V]$ is a $\DD$-ideal. Let $\d\in \DD$, then for an expression of the form $\sum_i g_i f^*(h_i)$, with $g_i\in K[V]$ and $h_i\in \mathcal I(X/K)$, we have
$$\d\left(\sum_i g_i f^{*}(h_i)\right)=\sum_i\; \d (g_i)f^*(h_i) +g_i f^*(\d h_i),$$
where we have used that $f$ is a $D$-morphism (so $f^*$ commute with $\d$). Since $X$ is a $D$-subvariety of $W$, $\d h_i\in \mathcal I(X/K)$, and so the above term is in the ideal generated by $f^*(\mathcal I(X/K))$, as desired.

(4) To prove that $Z$, the Zariski closure of $f(Y)$, is a $D$-subvariety of $W$, it suffices to show that each $K$-irreducible component of $Z$ is such. Thus, we assume that $Z$ is $K$-irreducible. Let $a$ be a Zariski $K$-generic $D$-point of an $K$-irreducible component of $Y$ that maps dominantly onto $Z$. Then, $b=f(a)$ is a Zariski $K$-generic of $Z$, and since $f$ is a $D$-morphism we have that $b$ is a $D$-point of $W$. Thus, 
$$s(b)=(b,\d_1 b, \dots,\d_m b)\in \tau_\DD Z,$$
and, by Zariski genericity of $b$, we must have $s(Z)\subseteq \tau_{\DD}Z$, as desired.
\end{proof}

From the above lemma we see that if $(V,s)$ is $K$-irreducible (meaning that $V$ is $K$-irreducible), then it contains a Zariski $K$-generic $D$-point. Indeed, if this were not the case, by saturation of $\mathcal U$ there would be a finite collection of proper Zariski closed subsets of $V$ defined over $K$ that contain all $D$-points of $V$. But as $V$ is $K$-irreducible this finite collection does not cover all of $V$ and hence we contradict part (2) of the lemma. 

Note that if $a$ is a Zariski $K$-generic $D$-point of $V$ (assuming $V$ is $K$-irreducible), then the function field $K(V)\cong K(a)$ equipped with the derivations $\bar \dd$ is a differential subfield of $(\mathcal U,\DD)$. Thus, from now on, we will assume that 
$$(K[V],\bar \dd)\leq (K(V),\bar \dd)<(\mathcal U,\DD),$$
and, moreover, we identify $\bar \dd$ with $\DD$.

\begin{proposition}\label{imply}
Let $k<\C$ and $(V,s)$ be a $k$-irreducible affine algebraic $D$-variety. Then, for any prime $\DD$-ideal $P$ of $k[V]$ we have
$$\DD\text{-locally closed}\implies \DD\text{-primitive}\implies \DD\text{-rational}.$$
Furthermore, suppose $V$ is geometrically irreducible (i.e., $k^{alg}$-irreducible), if in $k^{alg}[V]$ a prime $\DD$-ideal is $\DD$-rational only if it is $\DD$-locally closed, then the same holds in $k[V]$.
\end{proposition}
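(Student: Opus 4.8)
The plan is to treat the two implications and the descent statement separately, reducing everything to the geometry of $D$-points afforded by Lemma~\ref{basic}. For the implications I would first pass to the $k$-irreducible $D$-variety $W=V(P)$ with coordinate ring $k[V]/P$, which reduces us to the case $P=(0)$ (this is legitimate since $V(P)$ is a $D$-subvariety and the three properties are intrinsic to $k[V]/P$). For $\DD$-locally closed $\Rightarrow$ $\DD$-primitive, the hypothesis produces a nonzero $h\in k[W]$ lying in every nonzero prime $\DD$-ideal. I would pick a maximal ideal $\mathfrak m$ of $k[W]$ with $h\notin\mathfrak m$ (possible since $k[W]$ is a Jacobson domain and $h\neq 0$). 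The $\DD$-core of $\mathfrak m$ is the largest $\DD$-ideal inside $\mathfrak m$; if it were nonzero then, since radicals and minimal primes of $\DD$-ideals are again $\DD$-ideals \cite[Lemma~1.8]{Kap}, it would contain a nonzero prime $\DD$-ideal and hence $h$, contradicting $h\notin\mathfrak m$. Thus the $\DD$-core of $\mathfrak m$ is $(0)=P$.

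For $\DD$-primitive $\Rightarrow$ $\DD$-rational, now $(0)$ is the $\DD$-core of a maximal ideal $\mathfrak m_a$, which geometrically says the smallest $D$-subvariety through the corresponding point $a\in W(k^{alg})$ is all of $W$. Let $c$ be a $\DD$-constant of $k(W)$ and suppose it is transcendental over $k$. Since $c$ is a constant, its domain of definition is a $D$-invariant dense open and $c$ restricts there to a dominant $D$-morphism to $(\mathbb A^1,s_0)$ (whose $D$-points are $\C$). As the polar locus of $c$ is a proper $D$-invariant closed set not containing $a$, the value $c(a)$ is algebraic over $k$, and the union of the Galois conjugates of the fibre $c^{-1}(c(a))$ is, by Lemma~\ref{basic}(3), a proper $D$-subvariety over $k$ passing through $a$, contradicting minimality. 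Hence every such $c$ is algebraic over $k$, so $P$ is $\DD$-rational.

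For the descent statement I would base change to $k^{alg}$. Because $k<\C$, the derivations extend to $k^{alg}[V]=k^{alg}\otimes_k k[V]$ with $k^{alg}$ consisting of constants, and geometric irreducibility of $V$ ensures that the minimal primes $P_1,\dots,P_r$ over $P\cdot k^{alg}[V]$ are finitely many prime $\DD$-ideals, permuted transitively by $\operatorname{Aut}(k^{alg}/k)$. The first key point is that each $P_i$ is still $\DD$-rational: one identifies $\operatorname{Frac}(k^{alg}[V]/P_i)=\operatorname{Frac}(k[V]/P)\cdot k^{alg}$ as an algebraic differential extension of $F:=\operatorname{Frac}(k[V]/P)$, and a minimal-polynomial computation (differentiating the minimal polynomial of a new constant) shows the constants of this extension are algebraic over $Z_P(F)$, hence over $k^{alg}$. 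Applying the hypothesis over $k^{alg}$, each $P_i$ is $\DD$-locally closed, so the ideal $J'=\bigcap_{i}\bigl(\bigcap_{Q\in Spec_\DD(k^{alg}[V]),\,Q\supsetneq P_i}Q\bigr)$ strictly contains $\bigcap_i P_i=\sqrt{P\cdot k^{alg}[V]}$. Since $J'$ is $\operatorname{Aut}(k^{alg}/k)$-stable it descends by Galois descent to an ideal $J\subseteq k[V]$ with $J\supsetneq P$. Finally I would check that every prime $\DD$-ideal $Q\supsetneq P$ of $k[V]$ contains $J$: lifting $Q$ to a minimal prime $\DD$-ideal $Q'$ of $k^{alg}[V]$ one gets $Q'\supsetneq P_i$ for some $i$ (strict, as $Q'\cap k[V]=Q\supsetneq P$), whence $Q'\supseteq J'$ and $Q=Q'\cap k[V]\supseteq J$; therefore $\bigcap_{Q\supsetneq P}Q\supseteq J\supsetneq P$ and $P$ is $\DD$-locally closed.

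I expect the descent step to be the main obstacle: one must keep the correspondence between prime $\DD$-ideals upstairs and downstairs precise enough to transport the strict inequality witnessing local closedness, and simultaneously confirm that $\DD$-rationality is preserved under the base change $k\to k^{alg}$. By contrast, the two implications reduce quickly to the density and preimage properties of $D$-points recorded in Lemma~\ref{basic}.
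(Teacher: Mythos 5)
Your proof is correct and follows essentially the same route as the paper's: reduce to $P=(0)$, produce a maximal ideal avoiding the witness of local closedness, show that a $\DD$-constant of the function field is defined at the corresponding algebraic point and use its fibre there to force algebraicity, and descend the last implication from $k^{alg}$ via the irreducible components (your Galois-descent phrasing is just the ideal-theoretic version of the paper's passage to a $k^{alg}$-irreducible component $Y$ of $V(P)$ and the Zariski $k$-closure of the witness $Y'$). The only points to tidy are routine: Lemma~\ref{basic}(3) is stated for regular maps of closed affine $D$-varieties, so either restrict $c=p/q$ to the principal affine open $D(q)$ or argue directly with the $\DD$-ideal generated by a suitable $q^s\cdot(h\circ c)$ as the paper does; and the strict containment $J'\supsetneq\bigcap_i P_i$ needs the standard prime-avoidance combination of the witnesses $g_i\in J_i'\setminus P_i$ (using that $P_j\subseteq J_j'$ for every $j$), since a naive product of the $g_i$ could land in every $P_j$.
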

\begin{proof}
By passing to the quotient $k[V]/P$ we may assume that $P=(0)$. 

Now assume $(0)$ is $\DD$-locally closed. This means that 
$$\bigcap_{Q\in Spec_\DD k[V]\setminus (0)} Q\neq (0).$$
At the level of $D$-subvarieties of $V$ (recall that each such $Q$ corresponds to a proper $D$-subvariety of $V$), this is equivalent to $V$ having a proper $D$-subvariety $W$ over $k$ that contains all such. Take a point $a\in V\setminus W(k^{alg})$. Then $\mathcal I(a/k)\subset k[V]$ is a maximal ideal and any $\DD$-ideal inside it must be zero (as $a\notin W$). This shows that $(0)$ is the $\DD$-core of a maximal ideal of $k[V]$; in other words, $(0)$ is $\DD$-primitive.

On the other hand, assume $(0)$ is $\DD$-primitive. That is, there is a maximal ideal $\mathfrak m$ with $\DD$-core $(0)$. Let $a\in V(k^{alg})$ be such that $\mathfrak m=\mathcal I(a/k)$. Now let $f$ be a $\DD$-constant of $k(V)$. We must show that $f$ algebraic over $k$. We first show that $a$ is not in the singular locus of $f$. Towards a contradiction suppose it is. Then, for every representation $f=\frac{p}{q}$ we have $q(a)=0$; that is, $q\in \mathfrak m$. Since $f$ is a $\DD$-constant, we have that for each $\d\in \DD$
$$0=\d f=\frac{\d p\cdot q-p\cdot \d q}{q^2}$$
and so, either $\d q=0$, or $\frac{\d p}{\d q}=\frac{p}{q}=f$ in which case $\d q(a)=0$. In any case, $\d q\in \mathfrak m$. Repeating this process we obtain that $\d'\d q\in\mathfrak m$ for any $\d'\in \DD$, and so on, hence we get that the $\DD$-ideal generated by $q$ is contained in $\mathfrak m$. This contradicts the fact that the $\DD$-core of $\mathfrak m$ is $(0)$, and so $f$ is defined at $a$. Write $f=\frac{p}{q}$ where $q(a)\neq 0$. Now let $h\in k[t]$ be the minimal polynomial of $f(a)\in k^{alg}$. There is a sufficiently large integer $s$ such that if we set
$$r=q^s\cdot (h\circ f)$$
then $r\in k[V]$; and, since $r(a)=q^s(a)h(f(a))=0$, we also have that $r\in \mathfrak m$. Let $\d\in \DD$, since $\d(h\circ f)=(h'\circ f)\cdot \d f=0$, we get that $\d r=\d(q^s)\cdot(h\circ f)$; and so $\d r(a)=0$, implying that $\d r\in \mathfrak m$. Repeating this process we obtain that $\d'\d r\in\mathfrak m$ for any $\d'\in \DD$, and so on, hence we get that the $\DD$-ideal generated by $r$ is contained in $\mathfrak m$. By the choice of $\mathfrak m$, $r$ must be zero. This implies that $h(f)=0$ and so $f\in k^{alg}$. This shows $\DD$-rationality of $(0)$.  

For the 'furthermore' clause, suppose $V$ is geometrically irreducible and that a prime $\DD$-ideal of $k^{alg}[V]$ is $\DD$-rational only if it is $\DD$-locally closed. Let $P$ be a prime $\DD$-rational ideal of $k[V]$. We must show that $P$ is $\DD$-locally closed. Let $W$ be the $k$-irreducible $D$-subvariety of $V$ that corresponds to $P$. Also, let $Y$ be one of the $k^{alg}$-irreducible components of $W$. By Lemma \ref{basic}(1), $Y$ is a $D$-subvariety of $V$ (over $k^{alg}$). Let $b$ be a Zariski $k^{alg}$-generic $D$-point of $Y$; then $b$ is a Zariski $k$-generic point $D$-point of $W$. Since
$$k^{alg}(Y)= k^{alg}(b)\subseteq k(b)^{alg}$$
and the $\DD$-constants of $k(b)$ are algebraic over $k$ (by $\DD$-rationality of $P$), we get that the $\DD$-constants of $k^{alg}(Y)$ are precisely $k^{alg}$. In other words, the $\DD$-ideal $\mathcal I(Y/k^{alg})$ of $k^{alg}[V]$ is $\DD$-rational; by our assumption, this ideal is $\DD$-locally closed. That is, $Y$ contains a proper $D$-subvariety $Y'$ over $k^{alg}$ that contains all such. Letting $W'$ be the Zariski $k$-closure of $Y'$, we obtain a proper $D$-subvariety of $W$ over $k$ containing all such; equivalently, $P$ is $\DD$-locally closed. 
\end{proof}

We will need one more piece of model-theoretic terminology. We denote by $Aut_\DD(\mathcal U/K)$ the differential automorphisms of $\mathcal U$ fixing $K$. Given a tuple $a\in \mathcal U^n$, we define the (complete) type of $a$ over $K$ as the orbit of $a$ under the action of $Aut_\DD(\mathcal U/K)$ on $\mathcal  U^n$; in other words, 
$$tp(a/K):=\{b\in \mathcal U^n: \sigma(a)=b \text{ for some } \sigma\in Aut_\DD(\mathcal U/K)\}.$$
In model-theoretic parlance, we are identifying a type with its set of realizations in $\mathcal U$. A type $tp(a/K)$ is always given as an (infinite) intersection of $\mathcal L_{diff}$-definable sets in $\mathcal U$; namely, the intersections of all definable sets over $K$ containing $a$. We will call the set defined by $tp(a/K)$, in $\mathcal U^n$, the set of its realisations. We say that $tp(a/K)$ is \emph{isolated} if its set of realizations is a definable set (it will necessarily be definable over $K$).

\begin{remark}\label{gentype}
If $(V,s)$ is a $K$-irreducible $D$-variety and $a$ is a Zariski $K$-generic $D$-point, then the type $tp(a/K)$ is precisely the set of all Zariski $K$-generic $D$-points of $V$. Indeed, if $b$ is another Zariski $K$-generic $D$-point, then, by saturation of $\mathcal U$, there is a field automorphism $\sigma$ of $\mathcal U$ such that $b=\sigma(a)$. But since both, $a$ and $b$, are $D$-points, $\sigma$ is in fact a \emph{differential} homomorphism, and so $\sigma \in Aut_\DD(\mathcal U/K)$. The other implication is obvious.
\end{remark}

\begin{proposition}\label{isoloc}
Let $k$ be a subfield of $\C$. Let $(V,s)$ be a $k$-irreducible $D$-variety, and $a$ a Zariski $k$-generic $D$-point of $V$. Then $tp(a/k)$ is isolated if and only if $(0)$ is a $\DD$-locally closed $\DD$-ideal of $k[V]$. 
\end{proposition}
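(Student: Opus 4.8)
The plan is to translate each side of the equivalence into a geometric statement about proper $D$-subvarieties and then bridge the two through the observation that, on the definable set $(V,s)^\#$, the ambient differential structure collapses to the pure field structure. First I would record the two reformulations. By Remark~\ref{gentype}, the set $tp(a/k)$ is exactly the set of Zariski $k$-generic $D$-points of $V$. On the other side, $(0)$ is $\DD$-locally closed iff the intersection $J$ of all nonzero prime $\DD$-ideals of $k[V]$ is nonzero; since $V(J)$ is the Zariski $k$-closure of the union of all proper $D$-subvarieties of $V$, this holds iff that union is not Zariski dense, equivalently iff there is a proper $k$-closed subset $W_0\subsetneq V$ containing every proper $D$-subvariety of $V$.

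The next step, which I expect to be the \textbf{main obstacle}, is the reduction of isolation to a purely geometric condition. Because $a$ is a $D$-point, on $(V,s)^\#$ each $\d_i x$ equals the regular function $s_i(x)$ appearing in $s=(\operatorname{Id},s_1,\dots,s_m)$; substituting these, any quantifier-free $\mathcal L_{diff}$-formula over $k$ restricts on $(V,s)^\#$ to a quantifier-free $\mathcal L_{rings}$-formula. Invoking quantifier elimination in $\DDCF$, I would conclude that every subset of $(V,s)^\#$ definable over $k$ has the form $C\cap(V,s)^\#$ for some $k$-constructible $C\subseteq V$. Since $tp(a/k)\subseteq (V,s)^\#$, its being isolated (i.e.\ definable over $k$) means $tp(a/k)=C\cap(V,s)^\#$ for such a $C$; and as $a$ is $k$-generic, $C$ contains a nonempty Zariski-open $U$ over $k$ with $a\in U$, giving $U\cap(V,s)^\#\subseteq tp(a/k)$. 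The delicate points here are justifying the collapse to $\mathcal L_{rings}$ cleanly and extracting the open set $U$ from a constructible set meeting the generic point.

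I would then prove a small bridge lemma: the Zariski $k$-closure $X$ of any $D$-point $b$ is itself a $D$-subvariety. Indeed $s(b)=(b,\d_1 b,\dots,\d_m b)\in\tau_\DD X$ by the characteristic property of prolongations, and since $b$ is $k$-generic in $X$ while the condition $s(x)\in\tau_\DD X$ is Zariski closed over $k$, it propagates to $s(X)\subseteq\tau_\DD X$. Hence a $D$-point is $k$-generic in $V$ precisely when it lies in no proper $D$-subvariety. This is the key compatibility between the model-theoretic and algebro-geometric notions.

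Finally I would assemble the two implications. For the direction from $\DD$-local closedness, take the proper $k$-closed $W_0$ containing all proper $D$-subvarieties and set $U=V\setminus W_0$: every $D$-point in $U$ is $k$-generic, since by the bridge lemma its closure is a $D$-subvariety not contained in $W_0$ and hence equal to $V$, so $tp(a/k)=U\cap(V,s)^\#$ is definable and the type is isolated. Conversely, given a nonempty open $U$ over $k$ with $U\cap(V,s)^\#\subseteq tp(a/k)$, put $W_0=V\setminus U$; if some proper $D$-subvariety $X$ met $U$, then by density of $D$-points (Lemma~\ref{basic}(2)) $X$ would contain a $D$-point of $V$ inside $U$, which would be $k$-generic in $V$ yet lie in $X\subsetneq V$, a contradiction. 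Thus $W_0$ contains every proper $D$-subvariety, the union of proper $D$-subvarieties is not dense, $J\neq(0)$, and $(0)$ is $\DD$-locally closed.
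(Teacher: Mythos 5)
Your proposal is correct and follows essentially the same route as the paper's proof: both identify $tp(a/k)$ with the set of Zariski $k$-generic $D$-points via Remark~\ref{gentype}, use quantifier elimination to see that an isolating formula cuts out a set of the form (Zariski open) $\cap\, (V,s)^\#$, and exploit the fact that the Zariski $k$-locus of a $D$-point is a $D$-subvariety to match generic $D$-points with $D$-points avoiding the union of proper $D$-subvarieties. Your explicit ``bridge lemma'' and the appeal to density of $D$-points (Lemma~\ref{basic}(2)) in the converse direction merely spell out steps the paper leaves implicit.
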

\begin{proof}
Suppose $tp(a/k)$ is isolated. Then its set of realizations is definable over $k$. By quantifier elimination and the fact that all such realizations are $D$-points of $V$, this definable set must the form $( V\setminus W)\cap (V,s)^\#$ where $W$ is a proper Zariski closed subset of $V$ defined over $k$. Since, by Remark \ref{gentype}, the realisations of $tp(a/K)$ is precisely the set of all Zariski $k$-generic $D$-points of $V$, all $D$-subvarieties of $V$ defined over $k$ must be contained in $W$. At the level of $\DD$-ideals of $k[V]$, this is equivalent to saying that all the nonzero $\DD$-ideals contain $\mathcal I(W/k)$. This shows that 
$$\bigcap_{Q\in Spec_\DD k[V]\setminus (0)} Q\neq (0),$$
and so $(0)$ is $\DD$-locally closed.

On the other hand, assume $(0)$ is $\DD$-locally closed. Let $X$ be the proper $D$-subvariety of $V$ corresponding to the $\DD$-ideal $\cap_{Q\in Spec_\DD k[V]\setminus (0)} Q$. Note that $X$ contains all proper $D$-subvarieties of $V$ defined over $k$. If $a\in (V\setminus X)$ is a $D$-point, then it must a Zariski $k$-generic of $V$ (otherwise, its Zariski $k$-locus would yield a proper $D$-subvariety over $k$ not contained in $X$). This shows that the set
\begin{equation}\label{isol}
(V\setminus X)\cap (V,s)^\#
\end{equation}
coincides with the set of Zariski $k$-generic $D$-points of $V$. By Remark \ref{gentype}, this latter set is precisely $tp(a/K)$, so this type is given by the formula \eqref{isol} and therefore it is isolated.
\end{proof}

\begin{definition}[c.f. \cite{BLM}]\label{compound}
Let $(V,s)$ be a $K$-irreducible affine algebraic $D$-variety. 
\begin{enumerate}
\item We say that $(V,s)$ is isotrivial if there is a differential field $F<\mathcal U$ extension of $K$ and an injective $D$-morphism over $F$ from $(V,s)$ to $(W,s_0)$, where $W$ is defined over $\mathcal C_F$ and $s_0$ is the zero section. 
\item We say that $(V,s)$ is compound isotrivial in $\ell$-steps if there is an sequence of $K$-irreducible $D$-varieties $(V_i,s_i)$, $i=1,\dots,\ell$, and dominant $D$-morphisms over $K$
$$
\xymatrix{
V=V_{\ell}\ar[r]^{ \ \ \ f_\ell}&V_{\ell-1}\ar[r]^{f_{\ell -1}}&\cdots\ar[r]^{f_2}&V_{1}\ar[r]^{f_{1} \ \ \ }&V_0=0}
$$
such that for each $i=0,1,\dots,\ell-1$, if $a$ is a Zariski $K$-generic $D$-point of $V_{i}$, then $f_{i+1}^{-1}(a)$ is isotrivial. Here note that $f_{i+1}^{-1}(a)$ is a $K(a)$-irreducible $D$-subvariety of $V_{i+1}$ (by Lemma \ref{basic}(3)).
\end{enumerate}
\end{definition}

We now prove one of the key results of the paper. 

\begin{theorem}\label{ratiso}
Let $V$ be a $K$-irreducible affine compound isotrivial $D$-variety. If $\mathcal C_{K(V)}$, the $\DD$-constants of $K(V)$, is algebraic over $\mathcal C_K$, then the type of a Zariski $K$-generic $D$-point of $V$ is isolated.
\end{theorem}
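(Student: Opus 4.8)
The plan is to induct on the number $\ell$ of steps witnessing compound isotriviality; the genuine content is an auxiliary statement, call it $(\ast)$, which asserts the conclusion when $V$ is itself isotrivial, and which I prove afterwards. Throughout I identify the type of a Zariski $K$-generic $D$-point with its set of realizations (Remark~\ref{gentype}), so that isolation of $tp(a/K)$ means exactly that the set of Zariski $K$-generic $D$-points of $V$ is definable over $K$. When $\ell=0$, $V$ is a point and there is nothing to prove. For the inductive step, write the tower as $V=V_\ell\to\cdots\to V_0$, put $f=f_\ell$ and $b=f(a)$; then $b$ is a Zariski $K$-generic $D$-point of $V_{\ell-1}$, and since $K(b)\subseteq K(V)$ we have $\mathcal C_{K(b)}$ algebraic over $\mathcal C_K$, so the inductive hypothesis applied to the $(\ell-1)$-step variety $V_{\ell-1}$ gives that $tp(b/K)$ is isolated. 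The fibre $f^{-1}(b)$ is a $K(b)$-irreducible $D$-subvariety (Lemma~\ref{basic}(3)), it is isotrivial by the definition of compound isotriviality, and its function field over $K(b)$ is $K(a)$; as $\mathcal C_{K(a)}$ is algebraic over $\mathcal C_K\subseteq\mathcal C_{K(b)}$, statement $(\ast)$ applied over $K(b)$ yields that $tp(a/K(b))$ is isolated. Transitivity of isolation (if $tp(b/K)$ and $tp(a/K(b))$ are isolated, then so is $tp(a,b/K)$, hence $tp(a/K)$, since $b\in\operatorname{dcl}(Ka)$) then completes the induction, reducing everything to $(\ast)$.

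To prove $(\ast)$ I first reformulate isotriviality model-theoretically. The injective $D$-morphism $\phi\colon V\to(W,s_0)$ over $F\geq K$ carries the generic $D$-point $a$ to a constant point $c=\phi(a)\in W(\C)$, and, being injective in characteristic zero, $\phi$ is birational onto its image, so $a$ and $c$ are interdefinable over $F$ and in particular $a\in\operatorname{dcl}(F\cup\C)$; thus $p:=tp(a/K)$ is internal to the constant field $\C$. Replacing $K$ by $\tilde K:=\operatorname{acl}(K)=K^{alg}$ — over which $p$ is stationary, the constants still do not grow (since $\tilde K\langle a\rangle$ is algebraic over $K\langle a\rangle$), and from which isolation will descend back to $K$ — I consider the binding group $G$ of $p$: the group of permutations of the set $P$ of realizations of $tp(a/\tilde K)$ induced by the differential automorphisms in $Aut_\DD(\mathcal U/\tilde K\cup\C)$. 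By $\C$-internality $G$ is (isomorphic to) a type-definable group inside some $\C^n$, so by Remark~\ref{ondef} it is definable — an algebraic group over $\C$ — and it acts definably on $P$.

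The constants hypothesis enters exactly at the point of transitivity. Since $\tilde K$ is algebraically closed, the assumption that $\mathcal C_{\tilde K\langle a\rangle}$ is algebraic over $\mathcal C_{\tilde K}$ forces $\mathcal C_{\tilde K\langle a\rangle}=\mathcal C_{\tilde K}$, i.e. $a$ adjoins no new constants over $\tilde K$; by the model theory of the constant field this is equivalent to $tp(a/\tilde K)$ having a unique extension to $\tilde K\cup\C$, that is, to all elements of $P$ being conjugate over $\tilde K\cup\C$. This says precisely that $G$ acts transitively on $P$, so $P=G\cdot a$ is the orbit of a point under a definable group action and is therefore a definable set. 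Being the realization set of $tp(a/\tilde K)$, the set $P$ is $Aut_\DD(\mathcal U/\tilde K)$-invariant, so by elimination of imaginaries in $\DDCF$ it is definable over $\tilde K$; hence $tp(a/\tilde K)$, and therefore $tp(a/K)$, is isolated. I expect the main obstacle to be precisely this statement $(\ast)$, and within it the two external inputs: that $\C$-internality makes the binding group $G$ genuinely definable (the binding-group machinery, with Remark~\ref{ondef} supplying the final upgrade to an algebraic group over $\C$), and the bridge \emph{no new constants} $\Leftrightarrow$ \emph{$G$ acts transitively on $P$}. It is the transitivity of $G$, forced by the constants hypothesis, that converts the a priori only type-definable $P$ into a definable orbit; without it the type need not be isolated, as already happens for the generic type of $\C$, which is $\C$-internal but not isolated.
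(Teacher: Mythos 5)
Your proposal is correct and follows essentially the same route as the paper: induction on the number of steps with the inductive step handled by transitivity of isolation, and the isotrivial case handled by observing that the injective $D$-morphism to a variety over the constants makes the generic type internal to $\C$, that the hypothesis on constants gives weak orthogonality (equivalently, transitivity of the binding group on the realizations over $K^{alg}$), and that Remark~\ref{ondef} upgrades the type-definable binding group to a definable one, after which isolation descends from $K^{alg}$ to $K$. The only differences are cosmetic: you start the induction at $\ell=0$ and unpack the binding-group theorem (internality plus weak orthogonality implies a definable transitive action) rather than citing it as a black box, as the paper does via Hrushovski's appendix.
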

\begin{proof}
We let $a$ be a Zariski $K$-generic $D$-point of $V$. We must show that $tp(a/K)$ is isolated. Suppose $V$ is compound isotrivial in $\ell$-steps. We proceed by induction on $\ell$. 

For the base case, $\ell=1$, $V$ must be isotrivial. By definition, there is a definable (with possibly additional parameters) injective map from the $D$-points of $V$ to $\C$. In model theoretic terms this means that the type $tp(a/K)$ is \emph{internal} to $\C$. This in turn implies that $tp(a/K^{alg})$ is internal to $\C$ as well. On the other hand, the condition that $\mathcal C_{K(a)}=\mathcal C_{K(V)}\subset \mathcal C_K^{alg}$ translates in model-theoretic terms to the type $tp(a/K^{alg})$ being \emph{weakly orthogonal} to $\C$. Indeed, to see this, one must show that $K^{alg}(a)\cap K^{alg}(\C)$ is contained in $K^{alg}$. Taking $d$ in the intersection we get $d=h(c)$ for some polynomial $h$ over $K^{alg}$ and tuple $c$ from $\C$. If $X$ is the set of tuples $x$ from $\C$ such that $d=h(x)$, then, by Remark \ref{ondef}, $X$ is $\mathcal L_{rings}$-definable in $\C$ over $\mathcal C_{K^{alg}(a)}$. Hence, there is a tuple $c'$ from $\mathcal C_{K(a)}^{alg}\subset \mathcal C_K^{alg} $ that satisfies $X$; and so $d=h(c')\in K^{alg}$, as desired.  

It is a well known model-theoretic fact on binding groups of automorphisms (for a proof see \cite[Appendix B]{Hrushovski}) that the above two conditions (internality and weak orthogonality) imply the existence of a type-definable group $G\subseteq \C^n$ over $K^{alg}$ that acts definably (over $K^{alg}$) and transitively on $tp(a/K^{alg})$. By Remark~\ref{ondef}, $G$ must be definable (over $\mathcal C_{K^{alg}}$), and so the realisations of $tp(a/K^{alg})$ form a definable set; in other words, $tp(a/K^{alg})$ is isolated. Since the type of any tuple of $K^{alg}$ over $K$ isolated, we must have that $tp(a/K)$ is isolated as well, as desired. 

Now assume $\ell>1$, and suppose we have 
$$
\xymatrix{
V=V_{\ell}\ar[r]^{ \ \ \ f_\ell}&V_{\ell-1}\ar[r]^{f_{\ell -1}}&\cdots\ar[r]^{f_2}&V_{1}\ar[r]^{f_{1} \ \ \ }&V_0=0}
$$ 
as in Definition \ref{compound}(2). Let $b=f_\ell(a)$, then $b$ is a Zariski $K$-generic $D$-point of $V_{\ell-1}$. Since $\mathcal C_{K(b)}\subseteq \mathcal C_{K(a)}$, we have that $\mathcal C_{K(b)}$ is algebraic over $\mathcal C_K$; and so, by induction, $tp(b/K)$ is isolated. We now claim that $tp(a/K(b))$ is also isolated. Indeed, $a$ is a Zariski $K(b)$-generic $D$-point of $W:=f^{-1}(b)$. By definition of compound isotriviality, $W$ is isotrivial, and so by the base case ($\ell=1$) the type of $a$ over $K(b)$ is isolated. The result now follows from the fact that both types, $tp(b/K)$ and $tp(a/K(b))$, are isolated (this is an easy exercise but a proof appears in \cite[Lemma 4.2.21]{Marker}). 
\end{proof}

\begin{corollary}\label{impcor}
Let $k$ be a subfield of $\C$ and let $(V,s)$ be a $k$-irreducible affine compound isotrivial $D$-variety. If $(0)$ is a $\DD$-rational ideal of $k[V]$, then it is also $\DD$-locally closed. 
\end{corollary}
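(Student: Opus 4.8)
The plan is to deduce this directly from the two main results already in place, namely Theorem~\ref{ratiso} and Proposition~\ref{isoloc}, specialising the base field $K$ to the constant field $k$. The whole point is that $\DD$-rationality of $(0)$ is exactly the hypothesis feeding Theorem~\ref{ratiso}, while $\DD$-local closedness of $(0)$ is exactly its conclusion, once the latter is translated through the isolation-of-types dictionary provided by Proposition~\ref{isoloc}.

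First I would record the harmless but essential observation that, because $k<\C$, every element of $k$ is a $\DD$-constant, so the $\DD$-constants of $k$ are $\mathcal C_k=k$. Next, since $(V,s)$ is $k$-irreducible, the discussion following Lemma~\ref{basic} guarantees that $V$ contains a Zariski $k$-generic $D$-point $a$, and (identifying $\bar\dd$ with $\DD$ as arranged there) we have a differential-field isomorphism $k(V)\cong k(a)$. Under this identification, the hypothesis that $(0)$ is $\DD$-rational in $k[V]$ says precisely that $\mathcal C_{k(V)}=\mathcal C_{k(a)}$ is algebraic over $k=\mathcal C_k$.

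Now I would apply Theorem~\ref{ratiso} with $K=k$: the $D$-variety $V$ is $k$-irreducible and compound isotrivial, and we have just checked that $\mathcal C_{k(V)}$ is algebraic over $\mathcal C_k$, so the theorem yields that $tp(a/k)$ is isolated. Finally, invoking Proposition~\ref{isoloc} (whose standing hypotheses $k<\C$, with $(V,s)$ a $k$-irreducible $D$-variety and $a$ a Zariski $k$-generic $D$-point, are all in force), the isolation of $tp(a/k)$ is equivalent to $(0)$ being a $\DD$-locally closed $\DD$-ideal of $k[V]$, which is the desired conclusion.

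There is essentially no obstacle here beyond correctly matching up hypotheses: the genuine mathematical content lives in Theorem~\ref{ratiso} (through the binding-group and internality argument together with the induction on the number of isotriviality steps) and in Proposition~\ref{isoloc} (through quantifier elimination and Remark~\ref{gentype}). The only points requiring care are the two identifications above, namely that $\mathcal C_k=k$ because $k$ is a field of constants, and that $\DD$-rationality of $(0)$ transfers to the statement about $\mathcal C_{k(a)}$ via $k(V)\cong k(a)$; both are immediate, so the corollary follows at once.
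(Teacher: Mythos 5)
Your proposal is correct and is essentially the paper's own proof: both deduce the corollary by translating $\DD$-rationality of $(0)$ into the statement that $\mathcal C_{k(V)}$ is algebraic over $\mathcal C_k=k$, applying Theorem~\ref{ratiso} with $K=k$ to get isolation of the type of a Zariski $k$-generic $D$-point, and then invoking Proposition~\ref{isoloc} to conclude $\DD$-local closedness. The extra care you take in matching hypotheses (the identification $k(V)\cong k(a)$ and $\mathcal C_k=k$) is exactly what the paper's one-line argument implicitly relies on.
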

\begin{proof}
The fact that $(0)$ is $\DD$-rational translates to $\mathcal C_{k(V)}$ being algebraic over $k=\mathcal C_k$ (this equality holds since $k< \C)$. Now, by Theorem~\ref{ratiso}, the type of a Zariski $k$-generic $D$-point of $V$ is isolated; which, by Proposition~\ref{isoloc}, implies that $(0)$ is a $\DD$-locally closed ideal of $k[V]$, as desired.
\end{proof}

\section{The $\DD$-DME for commutative affine $D$-groups over constants}\label{groupiso}

In this section we discuss affine algebraic $D$-groups, and show that the connected commutative ones defined over the constants are compound isotrivial in 2-steps. We carry on the notation from the previous section; in particular, $(\mathcal U,\DD=\{\d_1,\dots,\d_m\})$ is a sufficiently large saturated model of $\DDCF$ and all base differential fields, $k$ or $K$, of parameters are assumed to be small (i.e., of cardinality less than that of $\mathcal U$).

Given an affine algebraic group $G$ over $K$, just as the tangent bundle $TG$ of $G$ has the structure of an algebraic group, the $\DD$-prolongation of $G$ also has a canonical structure of an algebraic group (over $K$). 

\begin{definition}
An affine algebraic $D$-group $(G,s)$ over $K$ is an affine algebraic group with the additional structure of a $D$-variety $s:G\to \tau_\DD G$, both over $K$, such that $s$ is a group homomorphism.
\end{definition}

\begin{remark}\label{onsection}
At the level of the coordinate ring $K[G]$, a section $s:G\to \tau_\DD G$ is a group homomorphism if and only if the derivations $\DD$ on $K[G]$ commute with coproduct $\D$. Indeed, the section
$$s=(\operatorname{Id}, s_1,\dots,s_m):G\to \tau_\DD G=\tau_{\d_1} G\times_G\cdots \times_G \tau_{\d_m}G$$
is a group homomorphism iff each section 
$$(\operatorname{Id},s_i): G\to \tau_{\d_i}G,$$
for $i=1,\dots,m$, is a group homomorphism. But each such section is a group homomorphism iff $\d_i$ commutes with $\D$ (this is well known but a proof appears in \cite[Lemma 2.18]{BLM}). 
\end{remark}

If $G$ is defined over a field of constants $k$, then the zero section $s_0:G\to \tau_\DD G$ is a group homomorphism. Thus, in this case $(G,s_0)$ is a $D$-group. Our main focus here is on the (compound) isotriviality of connected commutative $D$-groups over a field of constants. It turns out that to establish compound isotriviality of such $D$-groups one essentially only needs to understand the commutative unipotent case.

\begin{lemma}\label{isoadd}
Suppose $(G,s)$ is an algebraic $D$-group over $K$. If $G=\mathbb G_a^n$ for some $n$, then $(G,s)$ is isotrivial.
\end{lemma}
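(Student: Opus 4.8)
The plan is to read off the $D$-group structure on $G=\mathbb{G}_a^n$ as a system of \emph{linear} derivations and then to trivialize it by passing to a fundamental solution matrix, whose existence is supplied not by Picard--Vessiot theory but directly by the density of $D$-points. First I would unwind the structure. Write $K[G]=K[x_1,\dots,x_n]$ with each $x_j$ primitive, and let $\bar\partial=\{\partial_1,\dots,\partial_m\}$ be the derivations giving the $D$-structure. By Remark~\ref{onsection}, the section being a group homomorphism is equivalent to each $\partial_i$ commuting with coproduct. Applying this to a primitive $x_j$ gives $\Delta(\partial_i x_j)=\partial_i x_j\otimes 1+1\otimes\partial_i x_j$, so $\partial_i x_j$ is again primitive; since the primitives of the polynomial Hopf algebra $K[x_1,\dots,x_n]$ in characteristic zero are exactly the linear forms, we get $\partial_i x_j=\sum_k (A_i)_{jk}x_k$ for matrices $A_i\in M_n(K)$. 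Thus the entire $D$-structure is encoded by the linear system $\partial_i \bar x=A_i\bar x$, $i=1,\dots,m$.

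Next I would produce a fundamental solution matrix. Consider the affine variety $\mathrm{GL}_n$ over $K$ and equip its coordinate ring $K[M_{jk},\det(M)^{-1}]$ with derivations defined on the matrix of coordinates $M$ by $\partial_i(M)=A_iM$, extended as derivations in the usual way. The crucial point is that each $\partial_i$ is defined \emph{independently}, so that no compatibility or integrability condition among the (possibly noncommuting) $\partial_i$ is required for $(\mathrm{GL}_n,s)$ to be a bona fide affine $D$-variety. As $\mathrm{GL}_n$ is affine and $K$-irreducible, Lemma~\ref{basic}(2) guarantees a $D$-point $P\in\mathrm{GL}_n(\mathcal U)$, that is, an invertible matrix with $\d_i P=A_iP$ for all $i$. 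I would then set $F=K(P)<\mathcal U$, the differential field it generates.

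Finally I would exhibit the trivializing morphism. Define $\phi\colon G\to G$ over $F$ by $\phi(\bar x)=P^{-1}\bar x$, an injective regular map. To verify it is a $D$-morphism into $(\mathbb{G}_a^n,s_0)$, take any $D$-point $a$ of $(G,s)$, so $\d_i a=A_i a$, and set $b=P^{-1}a$. Using $\d_i(P^{-1})=-P^{-1}(\d_i P)P^{-1}=-P^{-1}A_i$, a direct computation gives $\d_i b=-P^{-1}A_i a+P^{-1}A_i a=0$ for every $i$, so $b$ is a constant point, i.e.\ a $D$-point of the zero section. Since $\mathbb{G}_a^n$ is defined over $\mathbb{Q}\subseteq\mathcal C_F$ and $\phi$ is injective over $F$, this is precisely an injective $D$-morphism over $F$ from $(G,s)$ to $(\mathbb{G}_a^n,s_0)$ with target over the constants, witnessing that $(G,s)$ is isotrivial.

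I expect the only genuine subtlety, and the point worth emphasizing, to be the existence of the invertible fundamental solution $P$: in the classical single-derivation setting this is the content of Picard--Vessiot theory, whereas here the Zariski density of $D$-points (Lemma~\ref{basic}(2)) supplies it directly. The independence of the individual $\partial_i$ on $K[\mathrm{GL}_n]$ is exactly what lets this argument go through in the noncommuting case, bypassing any integrability constraint that one might naively expect to be forced on the derivations.
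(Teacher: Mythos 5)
Your proposal is correct, and it diverges from the paper's own proof at exactly the step that matters. Both arguments begin identically: the homomorphicity of the section (equivalently, via Remark~\ref{onsection}, the fact that the $\partial_i$ commute with $\Delta$, so that each $\partial_i x_j$ is primitive and hence a linear form) forces $s_i(\bar x)=A_i\bar x$ with $A_i\in Mat_n(K)$. From there the paper takes a shortcut: it produces a single vector $b$ with $\d_i b=A_i b$ and claims that the translation $f(\bar x)=\bar x-b$ sends $D$-points to constant points. That computation does not go through: for a $D$-point $a$ one gets $\d_i(a-b)=A_ia-A_ib=A_i(a-b)$, which vanishes for all $D$-points only if $A_i=0$ (translation trivializes an inhomogeneous shift, not a linear system). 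Your version is the one that actually works: you pass to a fundamental solution matrix $P\in\mathrm{GL}_n(\mathcal U)$ with $\d_iP=A_iP$ and conjugate by $P^{-1}$, so that $\d_i(P^{-1}a)=-P^{-1}A_ia+P^{-1}A_ia=0$. Your mechanism for producing $P$ is also the right one in this setting: putting the (independently defined, hence integrability-free) derivations $\partial_i(M)=A_iM$ on $K[\mathrm{GL}_n]$ and invoking the Zariski density of $D$-points (Lemma~\ref{basic}(2)) replaces the Picard--Vessiot existence theorem, which is not available for noncommuting derivations. Two small remarks: $F=K(P)$ is automatically a differential subfield since the entries of $\d_iP=A_iP$ lie in $K(P)$; and your observation that $\mathbb G_a^n$ is defined over the prime field, hence over $\mathcal C_F$, is exactly what the definition of isotriviality requires of the target. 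In short, your proof is not merely an alternative route --- it supplies the fundamental-matrix argument that the paper's ``easy computation'' elides, and which is needed for the lemma as soon as some $A_i\neq 0$.
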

\begin{proof}
By the comments in Remark \ref{onsection}, each section
$$(\operatorname{Id},s_i): \mathbb G_a^n\to \mathbb G_a^{2n}$$
is a group homomorphism. Thus, $s_i:\mathbb G_a^n\to \mathbb G_a^n$ is of the form
$s_i(\bar x)=A_i\bar x$
for some $A_i\in Mat_n(K)$, $i=1,\dots,m$. We can find $b\in \mathbb G_a^n=\mathbb G_a^n(\mathcal U)$ such that 
$$\d_i b=A_i b$$
for $i=1,\dots,m$. Set $f:\mathbb G_a^n\to \mathbb G_a^n$ be the map $f(\bar x)=\bar x- b$. It is an easy computation now to check that for every $D$-point $a$ of $\mathbb G_a^n$ we have 
$$\d_i(f(a))=0$$
Hence, $f$ is an injective $D$-morphism between $(\mathbb G_a^n,s)$ and $(\mathbb G_a^n,s_0)$ (where recall that $s_0$ is the zero section). The result follows.
 \end{proof}
 
 \begin{proposition}\label{forcom}
Suppose $(G,s)$ is a connected algebraic $D$-group over a field of constants $k$. If $G$ is commutative, then every $k$-irreducible $D$-subvariety is compound isotrivial in 2-steps.
 \end{proposition}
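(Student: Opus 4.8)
The plan is to build the two-step tower by separating the unipotent and toric parts of $G$, invoking Lemma~\ref{isoadd} for the former and the rigidity of tori over constants for the latter. Since $k$ is perfect and $G$ is connected commutative and affine, in characteristic zero it decomposes as a direct product $G = T \times U$ of its maximal torus $T$ and a unipotent part $U \cong \mathbb{G}_a^n$, both defined over $k$. The crucial structural point is that $T$ is a $D$-subgroup of $(G,s)$ on which $s$ restricts to the zero section. Indeed, as $G$ is defined over constants, $\tau_{\d_i}G$ is the tangent bundle $TG$, and for commutative $G$ one has $TG \cong G \times \mathrm{Lie}(G)$; by Remark~\ref{onsection} each $s_i$ is then a group homomorphism $g \mapsto (g, \psi_i(g))$ for some $\psi_i \in \mathrm{Hom}(G,\mathrm{Lie}(G))$, where $\mathrm{Lie}(G)$ is a vector group. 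Restricting to the torus gives $\psi_i|_T \in \mathrm{Hom}(T,\mathbb{G}_a^{\dim G}) = 0$ (equivalently, the torsion points of $T$ are Zariski dense, lie in $k^{alg}\subseteq\C$, and are thus $\DD$-constant), so $s|_T = s_0$ and $(T,s|_T)=(T,s_0)$ is a $D$-variety over the constants with the zero section.

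Next I would use the quotient $\pi \colon G \to G/T \cong U \cong \mathbb{G}_a^n$. This is a $D$-morphism over $k$: the $D$-points of $(T,s_0)$ are precisely the constant points $T(\C)$, which are Zariski dense in $T$, so translations by them are $D$-morphisms and commute with $\DD$ on $k[G]$; hence the invariant ring $k[G/T]=k[G]^T$ is $\DD$-stable, which says exactly that $\pi$ is a $D$-morphism. (This is also an instance of the general fact that the quotient of a $D$-group by a $D$-subgroup is a $D$-group with $D$-morphism projection.) By Lemma~\ref{isoadd}, the unipotent $D$-group $G/T$ is isotrivial. Now let $V$ be any $k$-irreducible $D$-subvariety of $(G,s)$ and set $V_1 := \overline{\pi(V)}$; by Lemma~\ref{basic}(4) this is a $k$-irreducible $D$-subvariety of $G/T$, and being a $D$-subvariety of an isotrivial $D$-variety it is itself isotrivial. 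This yields a tower of dominant $D$-morphisms over $k$, $V = V_2 \xrightarrow{\pi|_V} V_1 \to V_0 = 0$, in which the base $V_1$ is isotrivial.

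It then remains to verify that the generic fibres of $\pi|_V$ are isotrivial. Let $w$ be a Zariski $k$-generic $D$-point of $V_1$ and put $F := (\pi|_V)^{-1}(w)$; by Lemma~\ref{basic}(3) this is a $k(w)$-irreducible $D$-subvariety sitting inside the fibre $\pi^{-1}(w)$, which is a coset of $T$. Using Lemma~\ref{basic}(2) I would pick a $D$-point $g_0 \in \pi^{-1}(w)$, so $\pi^{-1}(w) = g_0 T$. Because $s$ is a group homomorphism, the $D$-points $(G,s)^\#$ form a subgroup, so left translation $\ell_{g_0}$ carries $D$-points to $D$-points and is therefore a $D$-isomorphism from $(T,s_0)$ onto $(g_0 T, s|_{g_0 T})$. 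Hence the coset $g_0 T$ is isotrivial, and so is its $D$-subvariety $F$. Thus $V$ is compound isotrivial in two steps.

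The main obstacle, and the genuinely new ingredient beyond Lemma~\ref{isoadd}, is the rigidity of the torus over constants: although each torus fibre acquires a nontrivial logarithmic $D$-variety structure over $k(w)$, the fact that $T$ itself carries only the trivial $D$-group structure is what lets a single $D$-point $g_0$ trivialise it by translation. I expect the remaining subtleties to be bookkeeping ones: ensuring all maps stay defined over $k$ when $T$ is not split (which is handled by working with the Zariski-dense $\DD$-constant points, whose coordinates lie in $k^{alg}\subseteq\C$, rather than with explicit coordinates on $T$), and matching the irreducibility conventions for the generic fibre demanded by Definition~\ref{compound}(2).
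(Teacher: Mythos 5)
Your proof is correct, and its skeleton --- a two-step tower whose base is a commutative unipotent $D$-group handled by Lemma~\ref{isoadd}, and whose generic fibres are cosets of a subgroup carrying the zero section, trivialised by translating by a $D$-point supplied by Lemma~\ref{basic}(2)--(3) --- is exactly the paper's. Where you differ is in how the quotient map onto the vector group is produced. The paper takes the ``logarithmic derivative'' $f(\bar x)=s(\bar x)\cdot s_0(\bar x)^{-1}$, a regular group homomorphism (commutativity of $G$ is used here) from $G$ into $\mathfrak L(G)^m$; its image is automatically a vector group, and its kernel $N$ is \emph{by construction} the locus where $s=s_0$, so no structure theory of algebraic groups is needed. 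You instead invoke the splitting $G=T\times U$ over the perfect field $k$, prove $s|_T=s_0$ via the rigidity fact $\mathrm{Hom}(T,\mathbb G_a)=0$, and quotient by $T$. Since $T\subseteq N$, your base $G/T\cong U$ is in general a larger vector group than the paper's $H=f(G)$, but both serve. Your route costs two extra verifications that the paper's construction gets for free: that $\pi\colon G\to G/T$ is a $D$-morphism (your translation-invariance argument is fine), and that $G/T$ is genuinely a $D$-\emph{group} so that Lemma~\ref{isoadd} applies --- for this one should observe that $k[G/T]$ is a Hopf subalgebra of $k[G]$, so the restricted derivations still commute with the coproduct as required by Remark~\ref{onsection}. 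The paper's map has the further virtue of not depending on the commutative structure theory, which is why it is the construction that Remark~\ref{wishfor}(2) proposes to extend to noncommutative $D$-groups. One cosmetic slip: the fact that torsion points of $T$ are $\DD$-constant does not by itself yield $s|_T=s_0$; what does is that they lie in the kernel of each homomorphism $\psi_i\colon G\to\mathrm{Lie}(G)$ (torsion mapping into a torsion-free group), or simply your primary argument that $\mathrm{Hom}(T,\mathbb G_a^{\dim G})=0$.
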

 \begin{proof}
 As $G$ is over a field of constants, namely $k$, it comes equipped with the zero section $s_0$ as well. Set $f(\bar x)=s(\bar x)\cdot s_0(\bar x)^{-1}$ where the product and inverse occur in $\tau_\DD G$. Then $f$ is a regular (algebraic) map from $G$ to the $m$-th power of the Lie algebra $\mathfrak L(G)$ of $G$ (here we use again that $k<\C$). Moreover, as $G$ is commutative, $f$ is group homomorphism. 
 
Let $H$ be the image of $f$; then $H=\mathbb G_a^n$ for some $n$. The section $s$ induces, via $f$, a $D$-group structure $t$ on $H$. Thus, $f$ becomes a surjective group $D$-morphism between $(G,s)$ and $(H,t)$. We claim that
$$
\xymatrix{
G\ar[r]^{ f}&H\ar[r]^{ }&0}
$$ 
witnesses the compound isotriviality of $(G,s)$. Indeed, $(H,t)$ is isotrivial by Lemma \ref{isoadd}, so it suffices to show that if $a$ is a Zariski $k$-generic $D$-point of $H$ then $f^{-1}(a)$ is isotrivial. Let $g$ be a $D$-point of $f^{-1}(a)$ and $N:=\operatorname{ker}\, f$. Then $g$ induces an injective $D$-morphism from $f^{-1}(a)$ onto $N$ (as $D$-subvarieties of $G$) given by $h \mapsto g^{-1}\cdot h$. As $N$ is defined over $k$ and $s|_N$ is the zero section, $f^{-1}(a)$ is indeed isotrivial. Note that this argument actually shows that $f^{-1}(a)$ is isotrivial for any $D$-point $a$ of $H$ (not necessarily Zariski generic).

Now let $V$ be an arbitrary $k$-irreducible $D$-subvariety of $G$. Letting $W$ be the $D$-subvariety of $H$ given by the Zariski closure of $f(V)$ (see Lemma~\ref{basic}(4)) and $g:=f|_V$, we get that
$$
\xymatrix{
V\ar[r]^{ g}&W\ar[r]^{ }&0}
$$ 
witnesses the compound isotriviality of $V$. Indeed, $(W,t|_W)$ is isotrivial because it is a $D$-subvariety of the isotrivial $(H,t)$. Also, the argument in the above paragraph shows that if $b$ is a Zariski $k$-generic $D$-point of $W$ then $f^{-1}(b)$ is isotrivial, but then $g^{-1}(b)=f^{-1}(b)\cap V$ is isotrivial as well. The result follows.
 \end{proof}
 
 \begin{remark}\label{wishfor}\
 \begin{enumerate}
 \item We note that in Proposition \ref{forcom} one cannot obtain in general compound isotriviality in 1-step (in other words, isotriviality). For example, in the single derivative case $\DD=\{\d\}$, consider $G=\mathbb G_a\times \mathbb G_m$ with $D$-group structure $s:G\to TG$ given by
 $$s(x,y)=(x,y,0,xy).$$
 Then $G$ is not isotrivial, see \cite[\S 2]{Pillay} for details. 
 \item It follows from \cite[Fact 2.7(iii)]{KowalskiPillay}, that the center $Z(G)$ of an affine algebraic $D$-group $(G,s)$ is a (normal) $D$-subgroup. In the case of a single derivation $\DD=\{\d\}$, it was shown in \cite[Theorem 2.10]{KowalskiPillay} that $G/Z(G)$ with its induced $D$-group structure is isotrivial. While this result extends to the case of several \emph{commuting} derivations, it is not yet known if it holds in the general situation of possibly noncommuting derivations. It is worth noting that if such a result does hold, then one can extend the argument of Proposition~\ref{forcom} to show that any connected algebraic $D$-group over the constants is compound isotrivial in 3-steps (this would yield an interesting extension of \cite[Proposition~2.15]{BLM}).
 \end{enumerate}
 \end{remark}
 
 Putting Proposition \ref{forcom} together with the results of Section \ref{variso}, we obtain
 
 \begin{corollary}\label{DMEcor}
 Suppose $(G,s)$ is a connected algebraic $D$-group over a field of constants $k$. If $G$ is commutative, then $(k[G],\DD)$ satisfies the $\DD$-DME.
 \end{corollary}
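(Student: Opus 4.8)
The plan is to assemble Propositions~\ref{imply} and~\ref{forcom} together with Corollary~\ref{impcor}. Since $G$ is a connected algebraic group and $k$ has characteristic zero, $G$ is smooth and connected, hence $k$-irreducible; thus $(G,s)$ is a $k$-irreducible affine algebraic $D$-variety over $k<\C$. Proposition~\ref{imply} therefore applies directly and gives, for every prime $\DD$-ideal $P$ of $k[G]$, the implications $\DD$-locally closed $\Rightarrow$ $\DD$-primitive $\Rightarrow$ $\DD$-rational. It remains only to close the cycle, i.e.\ to show that $\DD$-rational $\Rightarrow$ $\DD$-locally closed.

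For this last implication I would argue by passing to a quotient. Fix a prime $\DD$-ideal $P$ of $k[G]$ and let $V$ be the $k$-irreducible $D$-subvariety of $G$ corresponding to it, so that $\mathcal I(V/k)=P$ and $k[V]=k[G]/P$; that $V$ is indeed a $D$-subvariety is exactly the statement that a prime $\DD$-ideal cuts out a $D$-subvariety. A routine check shows that under this quotient each of the three DME conditions for $P$ in $k[G]$ matches the corresponding condition for the zero ideal of $k[V]$: prime $\DD$-ideals of $k[G]$ properly containing $P$ correspond to the nonzero prime $\DD$-ideals of $k[V]$ (matching $\DD$-local closedness), maximal ideals containing $P$ descend to maximal ideals of $k[V]$ together with their $\DD$-cores (matching $\DD$-primitivity), and $\operatorname{Frac}(k[G]/P)=k(V)$ identifies the respective $\DD$-constant fields (matching $\DD$-rationality). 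Now $V$ is a $k$-irreducible $D$-subvariety of the connected commutative $D$-group $(G,s)$ over the field of constants $k$, so Proposition~\ref{forcom} shows that $(V,s|_V)$ is compound isotrivial (in $2$-steps). Since $k<\C$, Corollary~\ref{impcor} then yields that, if $(0)$ is $\DD$-rational in $k[V]$, it is $\DD$-locally closed. Translating back through the quotient, $\DD$-rationality of $P$ implies its $\DD$-local closedness, which completes the cycle and establishes the $\DD$-DME for $(k[G],\DD)$.

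The proof is essentially a bookkeeping assembly of the earlier results, so no single step is a genuine obstacle; the only point needing care is the quotient reduction in the second paragraph. There one must verify both that the three DME notions are preserved under the isomorphism $k[G]/P\cong k[V]$ and, more importantly, that the subvariety $V$ cut out by $P$ is still a $D$-subvariety of the commutative $D$-group $G$, so that Proposition~\ref{forcom} (which is stated for $D$-subvarieties of such a group) does apply to it. Both facts follow from the characterization of $D$-subvarieties as the zero sets of prime $\DD$-ideals.
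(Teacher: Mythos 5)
Your proposal is correct and follows essentially the same route as the paper: Proposition~\ref{imply} supplies the first two implications, and Proposition~\ref{forcom} combined with Corollary~\ref{impcor} closes the cycle. The quotient reduction you spell out in your second paragraph is exactly the correspondence the paper leaves implicit when it applies Corollary~\ref{impcor} to an arbitrary prime $\DD$-ideal of $k[G]$, so you have simply made the bookkeeping explicit.
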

\begin{proof}
By Proposition~\ref{forcom}, every $k$-irreducible $D$-subvariety of $G$ is compound isotrivial. Hence, by Corollary \ref{impcor}, a prime $\DD$-ideal of $k[G]$ is $\DD$-rational only if it is $\DD$-locally closed. The result now follows from Proposition~\ref{imply}.
\end{proof}

\section{Main results on Poisson-Hopf algebras}\label{mains}

Recall that $k$ denotes a field of characteristic zero. In this section we prove the main result of the paper; namely,

\begin{theorem}\label{main}
 Any cocommutative affine Poisson-Hopf $k$-algebra satisfies the Poisson Dixmier-Moeglin equivalence. 
 \end{theorem}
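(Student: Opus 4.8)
The plan is to reduce the Poisson-DME for a cocommutative affine Poisson-Hopf algebra to the $\DD$-DME for a commutative affine $D$-group over the constants, so that Corollary~\ref{DMEcor} applies. The first reduction step is to pass to the algebraically closed case: by standard descent arguments (and using the 'furthermore' clause of Proposition~\ref{imply}, together with Corollary~\ref{iff}), it suffices to establish the equivalence after base change to $k^{alg}$, so I would assume $k$ is algebraically closed from the outset. Next, using Corollary~\ref{iff}, the Poisson-DME for $(A,\{-,-\})$ is equivalent to the $\DD$-DME for $(A,\DD)$ as soon as we produce a family $\DD\subseteq \Der_k(A)$ with $\operatorname{span}_A\DD=\operatorname{span}_A(\{a,-\}:a\in A)$. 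The crucial structural input, flagged in Remark after Corollary~\ref{iff} as Proposition~\ref{key}, is that when $A$ is cocommutative and $k$ is algebraically closed one can in fact choose such a $\DD$ so that $(A,\DD)$ is a \emph{differential-Hopf} algebra, i.e.\ each $\d_i$ commutes with the coproduct $\D$.

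Granting that differential-Hopf structure, the geometric translation is the heart of the argument. Writing $A=k[G]$ where $G=\operatorname{Spec} A$ is an affine algebraic group (the Hopf structure on $A$ makes $G$ a group scheme, and affineness plus the domain hypothesis give a connected affine algebraic group), the condition that the derivations $\DD$ commute with coproduct is exactly the condition (Remark~\ref{onsection}) that the associated section $s:G\to\tau_\DD G$ is a group homomorphism. Hence $(G,s)$ is an affine algebraic $D$-group over $k$. Because $k$ is a field of constants, $G$ is moreover a $D$-group over the constants. The remaining point to verify is commutativity of $G$: here I expect to use that cocommutativity of the Hopf algebra $A=k[G]$ corresponds precisely to commutativity of the group $G$, so the cocommutative hypothesis delivers exactly the commutative $D$-group needed. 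Connectedness can be arranged by passing to the identity component, or handled componentwise via Lemma~\ref{basic}(1).

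With $(G,s)$ a connected commutative affine algebraic $D$-group over the constant field $k$, Corollary~\ref{DMEcor} gives directly that $(k[G],\DD)=(A,\DD)$ satisfies the $\DD$-DME. Translating back through Corollary~\ref{iff}, this says that $(A,\{-,-\})$ satisfies the Poisson-DME, which is the desired conclusion in the algebraically closed case. Finally, descending from $k^{alg}$ to $k$ via the 'furthermore' clause of Proposition~\ref{imply} (noting that one direction, locally closed $\Rightarrow$ primitive $\Rightarrow$ rational, holds over any base) completes the proof.

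I expect the main obstacle to be the construction of the differential-Hopf structure, i.e.\ Proposition~\ref{key}: producing a spanning family $\DD$ of the Hamiltonian derivations that \emph{simultaneously} commutes with the coproduct is not automatic, since an arbitrary generating set of Hamiltonians need not be compatible with $\D$. This is precisely where cocommutativity and algebraic closedness of $k$ must be used in an essential way, and it is the step that cannot be removed without the cocommutative hypothesis (cf.\ the discussion following Theorem~\ref{thm:A}). The other subtle points are verifying that cocommutativity of $A$ really forces commutativity of $G$ in this Poisson-Hopf setting, and checking that the passage to the identity component and to $k^{alg}$ interacts correctly with the $D$-variety and $\DD$-ideal formalism; but these I anticipate to be routine given Lemma~\ref{basic} and Proposition~\ref{imply}.
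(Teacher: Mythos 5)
Your proposal is correct and follows essentially the same route as the paper: reduce to $k$ algebraically closed (the paper packages this as Lemma~\ref{onalg}, proved exactly via the `furthermore' clause of Proposition~\ref{imply} and Corollary~\ref{iff}), invoke Proposition~\ref{key} to get a spanning family $\DD$ of derivations making $(A,\DD)$ a differential-Hopf algebra, identify $A=k[G]$ with $G$ a connected commutative affine algebraic $D$-group over the constants via Remark~\ref{onsection}, and conclude with Corollary~\ref{DMEcor} and Corollary~\ref{iff}. You correctly identify Proposition~\ref{key} as the crux where cocommutativity and algebraic closedness are used (via the explicit presentation $G\cong\mathbb G_a^s\times\mathbb G_m^t$), and the remaining points you flag as routine are indeed handled exactly as you anticipate.
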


 \begin{remark}\label{known}
By \cite[1.7(i), 1.10]{Oh}, in any affine Poisson algebra $(A,\{-,-\})$ we have that Poisson-locally closed implies Poisson-primitive, and Poisson-primitive implies Poisson-rational. We note that this also follows from our results in Section~\ref{variso}. Indeed, if we let $\DD$ denote the (finite) family of Hamiltonians of a collection of generators of $A$, then, by Proposition~\ref{imply}, in the differential $k$-algebra $(A,\DD)$ we have that $\DD$-locally closed implies $\DD$-primitive, and $\DD$-primitive implies $\DD$-rational. The result now follows from Corollary \ref{iff}. 
\end{remark}
 
We will make use of the following consequence of Proposition \ref{imply}.

\begin{lemma}\label{onalg}
Let $(A,\{-,-\})$ be an affine Poisson $k$-algebra such that $A\otimes k^{alg}$ is a domain. If in $A\otimes k^{alg}$ a prime ideal is Poisson-rational only if it is Poisson-locally closed, then the same holds in $A$.
\end{lemma}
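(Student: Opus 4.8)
The plan is to reduce the statement to the ``furthermore'' clause of Proposition~\ref{imply} by converting the Poisson data into a $D$-variety over the constants. First I would fix a finite set $G$ of $k$-algebra generators of $A$ and let $\DD=\{\{g,-\}:\, g\in G\}\subseteq \Der_k(A)$ be the associated finite family of Hamiltonians, taking $m=|G|$ as the number of derivations in the ambient model $\DDCF$. By Remark~\ref{ongenset} we have $\operatorname{span}_A\DD=\operatorname{span}_A(\{a,-\}:\, a\in A)$, so Corollary~\ref{iff} lets me replace, for the algebra $A$, every Poisson notion (rational, primitive, locally closed) by the corresponding $\DD$-notion. Since the Poisson bracket is $k$-bilinear, each element of $k$ is a $\DD$-constant; thus $(A,\DD)$ is a differential $k$-algebra in which $k$ lies in the constants, even though the derivations in $\DD$ need not commute (their commutators are again Hamiltonians, by the Jacobi identity).

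Next I would realise $A$ as the coordinate ring of a $D$-variety over a field of constants, so as to be in the exact setting of Proposition~\ref{imply}. As $A$ is an affine domain and $k$ sits in its $\DD$-constants, the differential field $(\operatorname{Frac}A,\DD)$ is a characteristic-zero differential field and hence embeds into the saturated model $(\mathcal U,\DD)$, with the image of $k$ contained in $\C$. Writing $V=\operatorname{Spec}A\subseteq \mathcal U^n$ and letting $s$ be the section of $\tau_\DD V\to V$ determined by $\DD$, the pair $(V,s)$ is an affine $D$-variety over $k$ with $k[V]=A$; it is $k$-irreducible because $A$ is a domain, and the hypothesis that $A\otimes k^{alg}$ is a domain says precisely that $V$ is geometrically (that is, $k^{alg}$-) irreducible. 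Moreover $k^{alg}[V]=A\otimes k^{alg}$, and the same family $\DD$, extended $k^{alg}$-linearly, satisfies $\operatorname{span}_{A\otimes k^{alg}}\DD=\operatorname{span}_{A\otimes k^{alg}}(\{a,-\})$ by Remark~\ref{ongenset}. Hence Corollary~\ref{iff} applied over $k^{alg}$ translates the hypothesis of the lemma into the statement that in $k^{alg}[V]$ a prime $\DD$-ideal is $\DD$-rational only if it is $\DD$-locally closed.

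With this dictionary in place the conclusion is immediate. The ``furthermore'' clause of Proposition~\ref{imply}, applied to the geometrically irreducible $D$-variety $(V,s)$, yields that in $k[V]=A$ a prime $\DD$-ideal is $\DD$-rational only if it is $\DD$-locally closed. Translating back through Corollary~\ref{iff} gives that in $A$ a prime Poisson ideal is Poisson-rational only if it is Poisson-locally closed, as required.

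I expect the only real subtlety to be the bookkeeping of the second and third ingredients above: verifying that the single family $\DD$ of Hamiltonians of the chosen generators simultaneously governs the Poisson structure on $A$ and on $A\otimes k^{alg}$, and that the $D$-variety and $\DD$-ideal correspondence is compatible with the base extension from $k[V]$ to $k^{alg}[V]$. The genuinely hard content, namely the descent of the implication ``$\DD$-rational $\Rightarrow$ $\DD$-locally closed'' along $k^{alg}/k$ for a geometrically irreducible $D$-variety, has already been carried out in Proposition~\ref{imply}, so no new model-theoretic argument is needed here.
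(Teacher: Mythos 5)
Your proof is correct and follows essentially the same route as the paper: choose the Hamiltonians of a finite generating set as $\DD$, use Remark~\ref{ongenset} and Corollary~\ref{iff} to translate between the Poisson and $\DD$-notions over both $k$ and $k^{alg}$, and then invoke the ``furthermore'' clause of Proposition~\ref{imply} for the geometrically irreducible $D$-variety $V$ with $k[V]=A$. The extra detail you supply about embedding $(\operatorname{Frac}A,\DD)$ into the saturated model and checking the spanning condition after base change is exactly the bookkeeping the paper leaves implicit.
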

\begin{proof}
The assumptions imply that $A$ is of the form $k[V]$ for some geometrically irreducible affine algebraic variety $V$ over $k$. Letting $\DD$ be the (finite) family of Hamiltonians of a collection of generators of $A$, we get that in $k^{alg}[V]$ a prime ideal is $\DD$-rational only if it is $\DD$-locally closed (by Corollary \ref{iff}). By the 'furthermore' clause of Proposition \ref{imply}, we get the same implication holds in $k[V]$. Again by Corollary \ref{iff}, we get that in  $A=k[V]$ a prime ideal is Poisson-rational only if it is Poisson-locally closed, as desired.
\end{proof} 

A commutative and cocommutative affine Hopf $k$-algebra is nothing more than the coordinate ring $k[G]$ of a connected commutative affine algebraic group $G$ over $k$. The following well known theorem characterizes such groups over $k^{alg}$ (see \cite{Borel} for instance).

\begin{theorem}\label{groups}
Let $G$ be a connected commutative affine algebraic group over $k$. Then $G$ is isomorphic over $k^{alg}$ to $\mathbb G_a^s\times \mathbb G_m^t$ for some $s$ and $t$.
\end{theorem}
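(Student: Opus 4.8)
The plan is to reduce immediately to the case where the base field is algebraically closed, so that we may assume $k=k^{alg}$ (of characteristic zero), and then exploit the Jordan decomposition. First I would recall that in any affine algebraic group over an algebraically closed field each element $g$ admits a unique decomposition $g=g_s g_u$ into commuting semisimple and unipotent parts. The crucial observation is that, because $G$ is commutative, the assignments $g\mapsto g_s$ and $g\mapsto g_u$ are group endomorphisms of $G$; consequently the set $G_s$ of semisimple elements and the set $G_u$ of unipotent elements are closed subgroups, each connected as the homomorphic image of the connected group $G$, and the product map $G_s\times G_u\to G$ is an isomorphism of algebraic groups (uniqueness of the decomposition gives injectivity, and every $g=g_s g_u$ gives surjectivity).

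It then remains to identify the two factors. For the semisimple part, $G_s$ is a connected commutative group all of whose elements are semisimple, hence a connected diagonalizable group, i.e.\ a torus; by the classification of tori over an algebraically closed field it is isomorphic to $\mathbb G_m^t$ for some $t$. For the unipotent part, $G_u$ is a connected commutative unipotent group, and here the characteristic-zero hypothesis is essential: via the exponential map identifying $G_u$ with its Lie algebra (equivalently, using that in characteristic zero every extension of $\mathbb G_a$ by $\mathbb G_a$ splits, so that by induction on $\dim G_u$ one realises $G_u$ as a vector group) one obtains $G_u\cong \mathbb G_a^s$. Combining the two identifications yields $G\cong \mathbb G_m^t\times \mathbb G_a^s$ over $k^{alg}$, as claimed.

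The main obstacle is the structural input for the unipotent factor. Establishing $G_u\cong\mathbb G_a^s$ genuinely requires characteristic zero — in positive characteristic Witt-vector groups furnish connected commutative unipotent groups that are not vector groups — so the argument must invoke the exponential/logarithm correspondence (or the vanishing of the relevant $\mathrm{Ext}$ groups) rather than any purely formal manipulation. By contrast, verifying that $G_s$ and $G_u$ are closed connected subgroups and that the product decomposition holds is a routine consequence of the functoriality and uniqueness of the Jordan decomposition in the commutative setting.
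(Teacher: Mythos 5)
Your argument is correct and is exactly the standard structure-theoretic proof (Jordan decomposition into $G_s\times G_u$, tori for the semisimple part, vector groups for the unipotent part in characteristic zero); the paper itself offers no proof but simply cites Borel's \emph{Linear Algebraic Groups}, where this is the argument given. Nothing further is needed.
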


A consequence of this result is that for any affine Hopf $k$-algebra $A$, that is commutative and cocommutative, we have that
\begin{equation}\label{same}
A\otimes k^{alg}=k^{alg}[x_1,\dots,x_s,y_1^{\pm},\dots,y_t^{\pm}]
\end{equation}
where the $x_i$'s are primitive and the $y_i$'s are group-like. We use this fact to prove:

\begin{proposition}\label{key}
Let $(A,\{-,-\})$ be an affine Poisson $k$-algebra. Suppose further that $k$ is algebraically closed and $A$ is a cocommutative Hopf algebra. Then, there is  $\DD \subseteq \Der_k(A)$ with
\begin{equation}\label{holds}
\operatorname{span}_A\DD=\operatorname{span}_A(\{a,-\}:\, a\in A),
\end{equation}
and such that $(A,\{-,-\})$ is a Poisson-Hopf algebra if and only if $(A,\DD)$ is a differential-Hopf algebra. 
\end{proposition}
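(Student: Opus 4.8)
The plan is to use the explicit structure of $A$. Since $A$ is commutative (it is a Poisson algebra) and cocommutative, and $k$ is algebraically closed, $A$ is the coordinate ring of a connected commutative affine algebraic group; by Theorem~\ref{groups} and \eqref{same} we may therefore write
\[
A=k[x_1,\dots,x_s,y_1^{\pm 1},\dots,y_t^{\pm 1}],
\]
with each $x_i$ primitive and each $y_j$ group-like. I would work throughout with the generating set $G_0=\{x_1,\dots,x_s,y_1^{\pm 1},\dots,y_t^{\pm 1}\}$ and take as candidate
\[
\DD=\{\{x_1,-\},\dots,\{x_s,-\}\}\cup\{y_1^{-1}\{y_1,-\},\dots,y_t^{-1}\{y_t,-\}\}.
\]
That \eqref{holds} holds is immediate from Remark~\ref{ongenset}: since each $y_j$ is a unit, $y_j^{-1}\{y_j,-\}$ and $\{y_j,-\}$ have the same $A$-span, and $\{y_j^{-1},-\}=-y_j^{-2}\{y_j,-\}$ lies in the span of $\{y_j,-\}$; hence $\operatorname{span}_A\DD$ equals the $A$-span of the Hamiltonians of $G_0$, which by Remark~\ref{ongenset} is all of $\operatorname{span}_A(\{a,-\}:a\in A)$.

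The substance is the biconditional, and the key observation---which is precisely what dictates the choice of $\DD$---is a discrepancy between primitive and group-like generators. For a primitive $x$, the standard lift of $\{x,-\}$ to $A\otimes A$ coincides as a derivation with $\{\Delta(x),-\}$, so $\{x,-\}$ commutes with $\Delta$ if and only if $\Delta(\{x,a\})=\{\Delta(x),\Delta(a)\}$ for all $a$. For a group-like $y$, however, the plain Hamiltonian $\{y,-\}$ does \emph{not} commute with $\Delta$ merely because the bracket does, since $\{\Delta(y),-\}=\{y\otimes y,-\}$ is not the standard lift of $\{y,-\}$. The remedy, and the crux of the argument, is to replace $\{y,-\}$ by the logarithmic Hamiltonian $\eta=y^{-1}\{y,-\}$: using $\Delta(y^{-1})=y^{-1}\otimes y^{-1}$, a direct computation shows that the identity $\Delta(\eta(a))=\eta(\Delta(a))$ (standard lift) is equivalent, upon multiplying through by the unit $y\otimes y$, to $\Delta(\{y,a\})=\{\Delta(y),\Delta(a)\}$.

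With these two computations established, both directions follow. If $A$ is Poisson-Hopf, then every $g\in G_0$ satisfies $\Delta(\{g,a\})=\{\Delta(g),\Delta(a)\}$, which by the two computations means each member of $\DD$ commutes with $\Delta$; thus $(A,\DD)$ is differential-Hopf. Conversely, if each member of $\DD$ commutes with $\Delta$, reading the same computations in reverse yields $\Delta(\{g,a\})=\{\Delta(g),\Delta(a)\}$ for $g=x_i$ and $g=y_j$ (and then for $y_j^{-1}$ via $\{y_j^{-1},-\}=-y_j^{-2}\{y_j,-\}$), and Remark~\ref{gen} promotes this to full commutation of the bracket with $\Delta$, i.e. $A$ is Poisson-Hopf. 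I expect the only genuine obstacle to be the conceptual one: noticing that plain Hamiltonians of group-like generators fail to commute with the coproduct and isolating the correct logarithmic correction $y^{-1}\{y,-\}$. Once that is in place, all the remaining steps are routine coproduct identities.
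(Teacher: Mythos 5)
Your proposal is correct and follows essentially the same route as the paper: reduce via Theorem~\ref{groups} to $A=k[x_1,\dots,x_s,y_1^{\pm},\dots,y_t^{\pm}]$, take plain Hamiltonians of the primitive generators and normalized (logarithmic) Hamiltonians $y_j^{-1}\{y_j,-\}$ of the group-like ones, and verify commutation with $\Delta$ by the same two coproduct computations, with Remark~\ref{gen} handling the converse. The identification of the normalization $y^{-1}\{y,-\}$ as the crux matches the paper's argument exactly.
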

\begin{proof}
Write $A=k[G]$ where $G$ is a connected commutative affine algebraic group over $k$. By Theorem \ref{groups} (or \eqref{same} rather), we may assume that 
$$A=k[x_1,\dots,x_s,y_1^{\pm},\dots,y_t^{\pm}]$$ where the $x_i$'s are primitive and the $y_i$'s are group-like. Consider the Hamiltonians $\d_{x_i}:=\{x_i,-\}:A\to A$, for $i=1,\dots,s$, and the normalized Hamiltonians $\d_{y_i}:=y_i^{-1}\{y_i,-\}:A\to A$, for $i=1,\dots,t$. We claim that 
$$\DD:=\{\d_{x_1},\dots,\d_{x_s}.\d_{y_1},\dots,\d_{y_t}\}$$
is the desired set of $k$-linear derivations. Clearly \eqref{holds} is satisfied (see Remark \ref{ongenset}). Now suppose that $(A,\{-,-\})$ is a Poisson-Hopf algebra. Let $1\leq i\leq s$ and set $x:=x_i$. Recall that $\D x=x\otimes 1+1\otimes x$. Now let $a\in A$, using sumless Sweedler notation we write $\D a=a_{(1)}\otimes a_{(2)}$. We then have
\begin{align*}
\d_x(\D(a)) & = \d_x(a_{(1)}\otimes a_{(2)}) \\
&= \d_x a_{(1)}\otimes a_{(2)}+a_{(1)}\otimes \d_x a_{(2)} \\
& = \{x,a_{(1)}\}\otimes a_{(2)}+ a_{(1)}\otimes \{x,a_{(2)}\} \\
&= \{x \otimes 1,a_{(1)}\otimes a_{(2)}\}+ \{1\otimes x,a_{(1)}\otimes a_{(2)}\} \\
&= \{x\otimes 1+1\otimes x, a_{(1)}\otimes a_{(2)}\} \\
&= \{\D x,\D a\} \\
&= \D(\{x,a\}) \quad\quad \quad\quad \quad \text{(since $A$ is a Poisson-Hopf algebra)} \\
&= \D(\d_x a) \\
\end{align*}

Now let $1\leq j\leq t$ and set $y=y_j$. Recall that $\D y=y\otimes y$. Now let $a\in A$. We then have
\begin{align*}
\d_y(\D(a)) & = \d_y(a_{(1)}\otimes a_{(2)}) \\
&= \d_y a_{(1)}\otimes a_{(2)}+a_{(1)}\otimes \d_y a_{(2)} \\
& = y^{-1}\{y,a_{(1)}\}\otimes a_{(2)}+ a_{(1)}\otimes y^{-1}\{y,a_{(2)}\} \\
&=\left(y^{-1}\otimes y^{-1}\right)\left(\{y,a_{(1)}\}\otimes y a_{(2)}+ y a_{(1)}\otimes \{y,a_{(2)}\}\right) \\
&= \D(y^{-1})\, \{y\otimes y, a_{(1)}\otimes a_{(2)}\} \\
&= \D(y^{-1})\, \{\D y,\D a\} \\
&= \D(y^{-1})\, \D(\{y,a\}) \quad\quad \quad\quad \quad \text{(since $A$ is a Poisson-Hopf algebra)} \\
&= \D(y^{-1}\{y,a\}) \\
&= \D(\d_y a) \\
\end{align*}
We have shown that all these derivations commute with coproduct; in other words, that $(A,\DD)$ is a differential-Hopf algebra.

The other implication (i.e., that if $\DD$ commutes with $\D$ then $\{-,-\}$ commutes with $\D$) follows from a similar series of equalities and applying Remark \ref{gen}.
\end{proof}

\begin{remark}\label{suggest}\
\begin{enumerate}
\item In terms of Poisson groups over an algebraically closed field $k$, in the sense of \cite[\S 1.3]{KS}, the above proposition shows that given a commutative affine algebraic group $G$ over $k$ equipped with a Poisson variety structure, one can find $\DD \subseteq \Der_k(k[G])$ such that \eqref{holds} holds, with $k[G]$ in place of $A$, and with the property that $G$ is a Poisson algebraic group if and only if it is an algebraic $D$-group (with respect to $\DD$). 
\item We do not know at this point whether or not the cocommutativity assumption can be removed from Proposition~\ref{key}. Nonetheless, we note that if this were the case and if algebraic $D$-groups over constants were compound isotrivial (see Remark~\ref{wishfor}(2)), then the proof below of Theorem~\ref{main} would work for any (not necessarily cocommutative) affine Poisson-Hopf $k$-algebra.
\end{enumerate}
\end{remark}

We can now prove Theorem \ref{main}.

 \begin{proof}[Proof of Theorem \ref{main}]
 By Remark \ref{known}, it suffices to show that $\DD$-rational implies $\DD$-locally closed. By Lemma \ref{onalg}, we may assume that $k$ is algebraically closed. Write $A$ as $k[G]$ where $G=\mathbb G_a^s\times \mathbb G_m^t$, and let $\DD$ be the family of $k$-linear derivations of $A$ obtained in Proposition \ref{key}. Then $(A,\DD)$ is a differential-Hopf algebra. 
 
 By Remark \ref{onsection}, the induced section $s:G\to \tau_\DD G$ is a group homomorphism; in other words, $(G,s)$ is a $D$-group. By Corollary \ref{DMEcor}, $(A, \DD)$ satisfies the $\DD$-DME; in particular, a prime $\DD$-ideal of $A$ is $\DD$-rational only if it is $\DD$-locally closed. By Corollary \ref{iff}, this in turn implies that a prime Poisson ideal of $A$ if Poisson-rational only if it is Poisson-locally closed, as desired.
 \end{proof}
 
 
We conclude with the following application:

\begin{theorem}
If $A$ is the symmetric algebra of a finite dimensional Lie algebra $\mathfrak g$ over $k$, equipped with its natural Poisson bracket, then $A$ satisfies the Poisson Dixmier-Moeglin equivalence.
\end{theorem}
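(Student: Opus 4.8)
The plan is to deduce this final theorem directly from Theorem~\ref{main} by verifying that the symmetric algebra of a finite-dimensional Lie algebra is a cocommutative affine Poisson-Hopf $k$-algebra. Once that is established, the statement is immediate. So the entire content of the proof is a routine check of the hypotheses of Theorem~\ref{main}, most of which has in fact already been carried out in Section~\ref{sec:Poisson-Hopf}.

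First I would recall that $A$ is affine: as a symmetric algebra $A = k[x_1,\dots,x_s]$ is a polynomial ring, where the $x_i$ form a $k$-basis of $\mathfrak g$, hence finitely generated and a domain. Next, $A$ carries the natural Poisson bracket exhibited in Section~\ref{sec:Poisson-Hopf}, namely
$$\{f,g\}=\sum_{i<j}[x_i,x_j]\left(\frac{\partial f}{\partial x_i}\frac{\partial g}{\partial x_j}-\frac{\partial f}{\partial x_j}\frac{\partial g}{\partial x_i}\right),$$
which makes $(A,\{-,-\})$ a Poisson $k$-algebra. I would then equip $A$ with the cocommutative Hopf structure in which every $x_i$ is primitive.

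The one substantive point to confirm is that the Poisson bracket commutes with the coproduct, i.e.\ that $(A,\{-,-\})$ is genuinely a Poisson-Hopf algebra. By Remark~\ref{gen}, it suffices to verify $\Delta(\{x_i,x_j\})=\{\Delta(x_i),\Delta(x_j)\}$ on the generators, and this is exactly the computation performed in Section~\ref{sec:Poisson-Hopf}: using primitivity of the $x_i$ and that $\Delta$ is an algebra homomorphism one obtains $\{\Delta(x_i),\Delta(x_j)\}=1\otimes[x_i,x_j]+[x_i,x_j]\otimes 1=\Delta([x_i,x_j])=\Delta(\{x_i,x_j\})$. Thus $A$ is a cocommutative affine Poisson-Hopf $k$-algebra.

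Having verified all the hypotheses, Theorem~\ref{main} applies directly and yields that $A$ satisfies the Poisson Dixmier-Moeglin equivalence. I do not anticipate a genuine obstacle here, since the theorem is purely a specialization of the main result to a family of examples already analyzed in the preliminaries; the only care needed is to cite the verification in Section~\ref{sec:Poisson-Hopf} rather than repeat it.
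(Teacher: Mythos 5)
Your proposal is correct and follows exactly the paper's own argument: cite the verification in Section~\ref{sec:Poisson-Hopf} that the symmetric algebra with its natural bracket and primitive generators is a cocommutative affine Poisson-Hopf $k$-algebra, then apply Theorem~\ref{main}. Nothing is missing.
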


\begin{proof} We know from Section \ref{sec:Poisson-Hopf} that $A$ is a cocommutative affine Poisson-Hopf $k$-algebra, and so the result follows from Theorem~\ref{main}. 
\end{proof}

\bibliographystyle{plain}

\end{document}